\newtheorem{thm}{Theorem}[section]
\newtheorem{cor}[thm]{Corollary}
\newtheorem{lem}[thm]{Lemma}
\newtheorem{prop}[thm]{Proposition}
\newtheorem{exm}[thm]{Example}
\theoremstyle{definition}
\newtheorem{defn}[thm]{Definition}
\theoremstyle{remark}
\newtheorem{rem}[thm]{Remark}
\numberwithin{equation}{section}
\newcommand{\abs}[1]{\left\vert#1\right\vert}
\def\cp{\mathcal{ P}}
\def\bn{{\mathbb N}}
  \def\G{\Gamma}
\def\d{\delta}
\def\eb{{\mathbf{e}}}
\def\xb{{\mathbf{x}}}
\def\yb{{\mathbf{y}}}
\def\zb{{\mathbf{z}}}
\def\yb{{\mathbf{y}}}
\def\ifn{\infty}
\def\bal1{\textbf{B}_1^+}
\begin{document}

\title[positive solutions]{Positive solution of nonlinear integral equations and surjectivity of Non-Linear Markov Operators On Infinite Dimensional Simplex}

 \author{Farrukh Mukhamedov}
\address{Farrukh Mukhamedov\\
    ,
 Department of Mathematical Sciences, \& College of Science,\\
The United Arab Emirates University, Al Ain, Abu Dhabi,\\
15551, UAE} \email{{\tt far75m@yandex.ru}, {\tt
farrukh.m@uaeu.ac.ae}}

 \author{Otabek Khakimov}
\address{Otabek Khakimov\\
    ,
 Department of Mathematical Sciences, \& College of Science,\\
The United Arab Emirates University, Al Ain, Abu Dhabi,\\
15551, UAE} \email{{\tt hakimovo@mail.ru}, {\tt
otabek.k@uaeu.ac.ae}}

\author{Ahmad Fadillah Embong}
\address{Ahmad Fadillah\\
 Department of Computational \& Theoretical Sciences\\
Faculty of Science, International Islamic University Malaysia\\
Kuantan, Pahang, Malaysia} \email{{\tt ahmadfadillah.90@gmail.com}}

\maketitle

\begin{abstract}
In the present paper, we consider the solvability of positive solutions of nonlinear integral equations by means of investigating
non-linear Markov operators. To solve the problem we find necessary and sufficient condition for the surjectivity of nonlinear Markov operators. \noindent {\it
	Mathematics Subject Classification}: 47H25; 37A30;
47H60\\
{\it Key words}: polynomial stochastic operator; surjective;
orthogonal preserving;
\end{abstract}

\section{Introduction}

Let $\Omega$ be a compact space with a finite Haar measure $\mu$. By $L_1(\Omega)$ and $L_\infty(\Omega)$
we denote the spaces
of all absolutely integrable and essentially bounded functions on $\Omega$, respectively.
Assume that a function $K\in L_1(\Omega^{m+1})$ is defined by
the almost everywhere convergent series:
\begin{equation}\label{yadro}
K(u_1,u_2,\dots,u_m,t)=\sum_{i_1,i_2,\dots i_m\geq1} a_{i_1}(u_1)a_{i_2}(u_2)\dots a_{i_m}(u_m)f_{i_1i_2\dots i_m}(t),
\end{equation}
where $a_{i_j},f_{i_1i_2\dots i_m}\in L_\infty(\Omega)$, $\sup\limits_{t\in\Omega}|f_{i_1i_2\dots i_m}(t)|\leq M$
for any $i_j\in\mathbb N$, $1\leq j\leq m$.

Let us introduce the following integral operator:
\begin{equation}\label{intOp}
{\bf A}x(t)=\int_{\Omega^{m}}K(u_1,u_2,\dots,u_m,t)x(u_1)x(u_2)\dots x(u_m)d\mu(u_1)d\mu(u_2)\dots d\mu(u_m).
\end{equation}

We notice that the domain of operator $A$ may not coincide with whole $L_1$. When $K$ is bounded (or almost everywhere bounded)
it is clear that
$A$ well defined for any $x\in L_1$. In the theory of operators, the main problems are the followings:\\
1) {\it the invariant subset problem}: to finding invariant subset w.r.t. a given operator;\\
2) {\it surjectivity of given operator};\\
3) {\it the existence of fixed points of operator}.

In this paper, we are interested in the studying above-mentioned problems for integral operator \eqref{intOp}
with some conditions on kernel \eqref{yadro}.

First of all, we suppose that the operator \eqref{intOp} is well defined on
$$
\mathcal D=\left\{x\in L_1(\Omega): \sum_{n=1}^\infty\int_{\Omega}a_n(t)x(t)d\mu
\leq1,\ \int_\Omega a_k(t)x(t)d\mu\geq0,\ \forall k\in\mathbb N, \right\}
$$
Then, \eqref{intOp} has the following form
\begin{equation}\label{A22}
{\bf A}x(t)=\sum_{i_1,\dots,i_m\geq1}x_{i_1}\dots x_{i_m}f_{i_1\dots i_m}(t),
\end{equation}
where
$$
x_{i_j}=\int_\Omega a_{i_j}(t)x(t)d\mu,\ \ \ j=\overline{1,m}.
$$
Let us denote
\begin{equation}\label{P_ijk}
P_{i_1i_2\dots i_m,k}=\int_\Omega a_k(t)f_{i_1\dots i_m}(t)d\mu,
\end{equation}
and assume that there exists a positive number $N$ such that
$|P_{i_1\dots i_m,k}|\leq N$, for any $i_1,\dots,i_m,k\in\bn$.
Then, from \eqref{A22} we find
$$
\int_\Omega a_k(t){\bf A}x(t)d\mu=\sum_{i_1,\dots,i_m}P_{i_1\dots i_m,k}x_{i_1}\dots x_{i_m},
$$
or
\begin{equation}\label{birV}
(V(\xb))_k=\sum_{i_1,\dots,i_m}P_{i_1\dots i_m,k}x_{i_1}\dots x_{i_m},\ \ \ \forall \xb\in{\bf B}_1^+,
\end{equation}
where
$$
{\bf B}_1^+=\left\{\xb=(x_1,x_2,\dots):\ x_n\geq0\ \forall n\in\bn,\ \mbox{and } \sum_{k\geq1}x_k\leq1\right\}.
$$

\begin{rem} We note that if hypermatrix $(P_{i_1\dots i_m,k})_{i_1,\dots,i_m,k\geq1}$ is stochastic, i.e.
$$
P_{i_1\dots i_m,k}\geq0,\ \ \ \sum_{n\geq1}P_{i_1\dots i_m,n}=1,\ \ \forall i_1,\dots,i_m,k\in\bn,
$$
then $V({\bf B_1^+})\subset{\bf B_1^+}$ (see Lemma \ref{lem1}). Polynomial operator $V$ defined as \eqref{birV}
is called {\it Polynomial Stochastic Operator} (in short PSO) if $(P_{i_1\dots i_m,k})_{i_1,\dots,i_m,k\geq1}$ is stochastic.
\end{rem}
Thus, the studying of operator \eqref{intOp} on $\mathcal D$ leads us to the studying
of polynomial operator \eqref{birV} on ${\bf B_1^+}$.

The main results of this paper are the followings:

\begin{thm}\label{thm_operA}
Let ${\bf A}$ be an integral operator given by \eqref{intOp} with the kernel \eqref{yadro}.
If hypermatrix $(P_{i_1i_2\dots i_m,k})_{i_1i_2\dots i_m,k\in\mathbb{N}}$ defined as \eqref{P_ijk}
is a stochastic,
then ${\bf A}(\mathcal D)\subset\mathcal D$. Moreover, if  PSO \eqref{birV} is surjective,
then there exists non-empty convex subset $\mathcal D'\subset\mathcal D$ such that
${\bf A}(\mathcal D')=\mathcal D'$.
\end{thm}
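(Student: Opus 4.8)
The plan is to transfer the two claims about $\mathbf A$ on $\mathcal D$ to the corresponding claims about the PSO $V$ on $\mathbf B_1^+$, using the linear map $\pi\colon L_1(\Omega)\to\ell_1$ defined by $\pi(x)=\xb=(x_1,x_2,\dots)$ with $x_k=\int_\Omega a_k(t)x(t)\,d\mu$. By the very definition of $\mathcal D$ we have $\pi(\mathcal D)\subset\mathbf B_1^+$, and by the derivation leading to \eqref{birV} the diagram commutes in the sense that $\pi(\mathbf A x)=V(\pi(x))$ for every $x\in\mathcal D$, because the $k$-th coordinate of $\pi(\mathbf A x)$ is exactly $\int_\Omega a_k(t)\mathbf A x(t)\,d\mu=(V(\pi(x)))_k$. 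For the first assertion, take $x\in\mathcal D$; then $\pi(x)\in\mathbf B_1^+$, so by Lemma~\ref{lem1} (the stochasticity of the hypermatrix gives $V(\mathbf B_1^+)\subset\mathbf B_1^+$) we get $\pi(\mathbf A x)=V(\pi(x))\in\mathbf B_1^+$, which says precisely that $\sum_{k\geq1}\int_\Omega a_k(t)\mathbf A x(t)\,d\mu\leq1$ and $\int_\Omega a_k(t)\mathbf A x(t)\,d\mu\geq0$ for all $k$; hence $\mathbf A x\in\mathcal D$, i.e. $\mathbf A(\mathcal D)\subset\mathcal D$.

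For the second assertion, assume $V$ is surjective on $\mathbf B_1^+$. The natural candidate is
\[
\mathcal D'=\overline{\operatorname{conv}}\,\bigl\{\,\mathbf A^n x_0 : n\geq0\,\bigr\}
\]
for a suitable starting point, or more cleanly a set built so that $\pi(\mathcal D')=\mathbf B_1^+$ and $\mathbf A$ maps $\mathcal D'$ onto itself. Concretely, since $\pi$ is linear and $\mathbf B_1^+$ is convex, the preimage $\pi^{-1}(\mathbf B_1^+)\cap\mathcal D=\mathcal D$ is already convex; the issue is that $\mathbf A$ need not be surjective on $\mathcal D$ even though $V$ is, because $\pi$ is far from injective. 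So I would instead fix any right inverse: for each $\yb\in\mathbf B_1^+$ choose $\sigma(\yb)\in\mathbf B_1^+$ with $V(\sigma(\yb))=\yb$, and try to lift this to a selection in $L_1$. The clean route is to note that the restriction of $\mathbf A$ to the ``section'' determined by coordinates is governed entirely by $V$: if one sets $\mathcal D'=\{x\in\mathcal D : \pi(x)\in\mathcal C\}$ where $\mathcal C\subset\mathbf B_1^+$ is a convex set with $V(\mathcal C)=\mathcal C$ (such a $\mathcal C$ exists: take $\mathcal C=\bigcap_{n\geq0}V^n(\mathbf B_1^+)$, which is nonempty, convex, and satisfies $V(\mathcal C)=\mathcal C$ because $V$ is surjective and the $V^n(\mathbf B_1^+)$ are a decreasing chain of nonempty convex sets whose images behave well under the surjection), then $\mathbf A(\mathcal D')\subset\mathcal D'$ and, using surjectivity of $V$ on $\mathcal C$ together with the fact that $\mathbf A$ acts on $\mathcal D'$ through $V$ plus the freedom in choosing preimages in $L_1$, one gets $\mathbf A(\mathcal D')=\mathcal D'$.

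The main obstacle is exactly this last lifting step: surjectivity of $V$ only gives, for a target $y\in\mathcal D'$, a point $\yb^*\in\mathcal C$ with $V(\yb^*)=\pi(y)$, and one must produce an \emph{honest} $x\in\mathcal D'$ with $\mathbf A x=y$, not merely $\pi(\mathbf A x)=\pi(y)$. This requires either (i) choosing the functions $f_{i_1\dots i_m}$ and $a_k$ so that $\mathbf A x$ is determined by $\pi(x)$ alone — which is automatic from \eqref{A22}, since $\mathbf A x(t)=\sum x_{i_1}\dots x_{i_m}f_{i_1\dots i_m}(t)$ depends on $x$ only through $\pi(x)$ — and then (ii) checking that the map $\mathbf B_1^+\to L_1(\Omega)$, $\xb\mapsto\sum x_{i_1}\dots x_{i_m}f_{i_1\dots i_m}(\cdot)$, call it $\widetilde{\mathbf A}$, satisfies $\mathbf A x=\widetilde{\mathbf A}(\pi(x))$ and is injective enough on $\mathcal C$-images. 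In fact once (i) is in hand the problem collapses: define $\mathcal D'=\widetilde{\mathbf A}(\mathcal C)$; this is a convex subset of $\mathcal D$ (convexity needs a short argument since $\widetilde{\mathbf A}$ is polynomial, but $\mathcal C=V(\mathcal C)$ lets one write any element of $\widetilde{\mathbf A}(\mathcal C)$ as $\widetilde{\mathbf A}(V(\zb))=\mathbf A(\widetilde{\mathbf A}(\zb))$...), and then $\mathbf A(\mathcal D')=\mathbf A\widetilde{\mathbf A}(\mathcal C)=\widetilde{\mathbf A}(V(\mathcal C))=\widetilde{\mathbf A}(\mathcal C)=\mathcal D'$. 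So the real work is establishing the identity $\mathbf A=\widetilde{\mathbf A}\circ\pi$ on $\mathcal D$ (immediate from \eqref{A22}) and the existence and properties of the invariant convex core $\mathcal C=\bigcap_n V^n(\mathbf B_1^+)$; I expect the verification that $\mathcal D'$ is convex and that the chain $V^n(\mathbf B_1^+)$ has the stated intersection properties under mere surjectivity (no continuity assumed a priori) to be the delicate point.
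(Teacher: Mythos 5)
Your first half is correct and is essentially the paper's argument: you introduce the coordinate map $\pi$ (the paper calls it $\Im$), observe $\pi({\bf A}x)=V(\pi(x))$, and invoke Lemma \ref{lem1} to conclude ${\bf A}(\mathcal D)\subset\mathcal D$. The factorizations ${\bf A}=\widetilde{\bf A}\circ\pi$ and $\pi\circ\widetilde{\bf A}=V$ that you set up are also valid and are implicitly what the paper uses.

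The second half has a genuine gap: your candidate $\mathcal D'=\widetilde{\bf A}(\mathcal C)$ is not convex in general, and the theorem explicitly demands a convex invariant set. (As a side remark, your $\mathcal C=\bigcap_{n\ge0}V^n({\bf B}_1^+)$ is just ${\bf B}_1^+$ once $V$ is surjective, so that construction buys nothing.) The set $\widetilde{\bf A}({\bf B}_1^+)$ is the image of a convex set under a degree-$m$ polynomial map: already for $m=2$ and $\xb=t\eb_1+(1-t)\eb_2$ one gets $\widetilde{\bf A}(\xb)=t^2f_{11}+2t(1-t)f_{12}+(1-t)^2f_{22}$, a parabolic arc whose chord midpoint $\tfrac12(f_{11}+f_{22})$ need not lie in the image when $f_{12}\ne\tfrac12(f_{11}+f_{22})$. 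Your parenthetical ``convexity needs a short argument \dots'' trails off precisely where the work is, and rewriting points as ${\bf A}(\widetilde{\bf A}(\zb))$ does not repair this. The missing idea is a structural consequence of surjectivity that you never extract: by Theorem \ref{prop_2} there is a sequence $\{j_k\}$ with $P_{j_k\dots j_k,k}=1$, hence (by stochasticity of \eqref{P_ijk}) $\pi(f_{j_k\dots j_k})=\eb_k$. The paper then takes $\mathcal D'=\bigcup_{r\in[0,1]}r\cdot\mathrm{conv}\{f_{j_k\dots j_k}\}_{k}$, which is convex by construction, and on each slice $\mathcal D_r$ the map $\pi$ is an \emph{affine bijection} onto $S_r$; this is exactly the injectivity you correctly identified as the obstacle (``one must produce an honest $x$ with ${\bf A}x=y$, not merely $\pi({\bf A}x)=\pi(y)$''), and it lets the surjectivity of $V$ on $S_r$ (via Lemma \ref{lem2}) be pulled back to surjectivity of ${\bf A}$ on $\mathcal D'$. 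Without locating these diagonal kernels $f_{j_k\dots j_k}$ and restricting to their convex hull, neither the convexity nor the lifting step goes through.
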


\begin{cor}
Let ${\bf A}$ be an integral operator given by \eqref{intOp} with the kernel \eqref{yadro}.
Assume that PSO $V=(P_{ij,k})_{i,j,k\geq1}$ given by \eqref{P_ijk} is a surjective.
Then there exists non-empty convex subset $\mathcal D'\subset\mathcal D$ such that
for any $\varphi\in\mathcal D'$ the integral equation
${\bf A}x=\varphi$
has at least one solution $x$ in $\mathcal D'$.
\end{cor}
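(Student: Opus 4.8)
The plan is to derive Theorem \ref{thm_operA} from two facts: first, the containment ${\bf A}(\mathcal D)\subset\mathcal D$, and second, the way the integral operator ${\bf A}$ is intertwined with the polynomial operator $V$ via the ``coordinate map'' $\pi:L_1(\Omega)\to\ell_1$ given by $\pi(x)=(x_1,x_2,\dots)$ with $x_k=\int_\Omega a_k(t)x(t)\,d\mu$. The defining computation \eqref{birV} says precisely that $\pi({\bf A}x)=V(\pi(x))$ for every $x\in\mathcal D$, i.e. $\pi\circ{\bf A}=V\circ\pi$ on $\mathcal D$, and by construction $\pi(\mathcal D)\subset{\bf B}_1^+$.

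For the first claim, I would start from the series form \eqref{A22}. Fix $x\in\mathcal D$ and set $\xb=\pi(x)\in{\bf B}_1^+$. Then $\int_\Omega a_k(t)\,{\bf A}x(t)\,d\mu=(V(\xb))_k$ for all $k$. Since the hypermatrix $(P_{i_1\dots i_m,k})$ is stochastic, the cited Lemma \ref{lem1} (referenced in the Remark) gives $V({\bf B}_1^+)\subset{\bf B}_1^+$, so $(V(\xb))_k\geq0$ for all $k$ and $\sum_k (V(\xb))_k\leq1$. That is exactly the pair of conditions defining membership in $\mathcal D$ applied to ${\bf A}x$; hence ${\bf A}x\in\mathcal D$, proving ${\bf A}(\mathcal D)\subset\mathcal D$.

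For the ``moreover'' part, suppose the PSO $V$ is surjective, i.e. $V({\bf B}_1^+)={\bf B}_1^+$. The idea is to build $\mathcal D'$ as the ``$V$-saturation'' of a fixed point on the simplex side, pulled back to function space. Concretely: by surjectivity of $V$, choose (using a standard compactness / Brouwer-type argument, or directly invoking whatever fixed-point statement the paper proves for surjective PSO) a fixed point $\p^*\in{\bf B}_1^+$ with $V(\p^*)=\p^*$, and more usefully a full backward orbit; then let $S\subset{\bf B}_1^+$ be a nonempty convex set with $V(S)=S$ (for instance $S=\bigcap_{n\ge0}V^n({\bf B}_1^+)={\bf B}_1^+$ itself when $V$ is onto, which is convex). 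Now set
\[
\mathcal D'=\{x\in\mathcal D:\ \pi(x)\in S\ \text{and}\ x\ \text{lies in the forward orbit closure of }\pi^{-1}(S)\cap\mathcal D\}.
\]
More cleanly, I would define $\mathcal D'$ directly as a maximal subset of $\mathcal D$ on which ${\bf A}$ acts surjectively: let $\mathcal D_0=\mathcal D$ and $\mathcal D_{n+1}={\bf A}(\mathcal D_n)$; each $\mathcal D_n$ is convex because ${\bf A}$ is... well, ${\bf A}$ is \emph{not} affine, so convexity of the image is not automatic — this is the main obstacle. The resolution is to work on the simplex side, where one shows $\bigcap_n V^n({\bf B}_1^+)$ is convex and $V$-invariant precisely when $V$ is surjective (in fact it equals ${\bf B}_1^+$), then take $\mathcal D'=\{x\in\mathcal D:\pi(x)\in\bigcap_n V^n({\bf B}_1^+)\}$ intersected with a convex slice on which $\pi$ is injective, and verify ${\bf A}(\mathcal D')=\mathcal D'$ using $\pi\circ{\bf A}=V\circ\pi$ together with surjectivity of $V$ to lift preimages.

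The hard part, therefore, is twofold: (a) producing a nonempty \emph{convex} $\mathcal D'$ despite ${\bf A}$ being genuinely nonlinear, which forces one to transport the problem to ${\bf B}_1^+$ via $\pi$ and exploit that the intertwining relation plus surjectivity of $V$ lets us lift both forward images and preimages; and (b) controlling the fibers of $\pi$ — given $\yb\in{\bf B}_1^+$ we need some $x\in\mathcal D$ with $\pi(x)=\yb$ and ${\bf A}x$ landing back in $\mathcal D'$, which requires choosing $\mathcal D'$ as the image of a carefully chosen convex section of $\pi$ over ${\bf B}_1^+$ (e.g. finite linear combinations $x=\sum_k y_k g_k$ for a fixed biorthogonal-type system $(g_k)$ dual to $(a_k)$, if such exists, or more abstractly the closed convex hull of one chosen preimage of each extreme point). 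Once $\mathcal D'$ is set up so that $\pi|_{\mathcal D'}$ is a convex bijection onto a $V$-invariant convex subset of ${\bf B}_1^+$ and $\pi\circ{\bf A}=V\circ\pi$, the identity ${\bf A}(\mathcal D')=\mathcal D'$ follows by applying $\pi^{-1}$ to $V(\pi(\mathcal D'))=\pi(\mathcal D')$.
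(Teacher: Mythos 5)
Your overall strategy is the right one and matches the paper's in outline: establish the intertwining $\Im\circ{\bf A}=V\circ\Im$ for the coordinate map $\Im(x)_k=\int_\Omega a_k(t)x(t)\,d\mu$, deduce ${\bf A}(\mathcal D)\subset\mathcal D$ from stochasticity of the hypermatrix, and then try to manufacture a convex $\mathcal D'\subset\mathcal D$ on which $\Im$ restricts to a bijection onto a $V$-invariant convex piece of ${\bf B}_1^+$, so that surjectivity of $V$ can be pulled back through $\Im$. (The first part should also record that ${\bf A}x\in L_1(\Omega)$, which the paper gets from the uniform bound $\sup_t|f_{i_1\dots i_m}(t)|\le M$; membership in $\mathcal D$ presupposes this.)

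However, there is a genuine gap at the decisive step, and you flag it yourself: you need ``a fixed biorthogonal-type system $(g_k)$ dual to $(a_k)$, \emph{if such exists}.'' The proposal never produces one, and without it the convex section of $\Im$ over ${\bf B}_1^+$ cannot be built; the detours through fixed points, backward orbits, and $\bigcap_n V^n({\bf B}_1^+)$ do not supply it. The missing ingredient is the structural result Theorem \ref{prop_2}: if $V$ is a surjective $m$-ordered PSO, then there is a sequence $\{j_k\}_{k\ge1}\subset\bn$ with $P_{j_k\dots j_k,k}=1$ for every $k$. Combined with stochasticity ($\sum_n P_{j_k\dots j_k,n}=1$) this forces $P_{j_k\dots j_k,n}=0$ for $n\neq k$, i.e. $\Im(f_{j_k\dots j_k})=\eb_k$ --- the functions $f_{j_k\dots j_k}$, which already lie in $\mathcal D$, \emph{are} the biorthogonal system you were hoping for. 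The paper then sets $\mathcal D_r=r\cdot\mathrm{conv}\{f_{j_k\dots j_k}\}_{k\in\bn}$ and $\mathcal D'=\bigcup_{r\in[0,1]}\mathcal D_r$, checks that $\Im:\mathcal D_r\to S_r$ is a bijection, and lifts preimages through $\Im\circ{\bf A}=V\circ\Im$ using the scaling identity $V(r\xb)=r^mV(\xb)$ and Lemma \ref{lem2} to pass between $S$ and ${\bf B}_1^+$. Once ${\bf A}(\mathcal D')=\mathcal D'$ is in hand, the corollary is immediate. So: right architecture, but the proof cannot close without the $P_{j_k\dots j_k,k}=1$ theorem, which is the real content you would still need to prove.
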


\begin{thm}\label{thm_fixed}
Let ${\bf A}$ be an integral operator given by \eqref{intOp} with the kernel \eqref{yadro}.
Assume that $(P_{i_1i_2\dots i_m,k})_{i_1i_2\dots i_m,k\in\mathbb{N}}$ given by \eqref{P_ijk}
be stochastic and $V$ be polynomial operator \eqref{birV}. If $V$ be surjective then
$$
|Fix_{\bf B_1^+}(V)|\leq|Fix_\mathcal{D}({\bf A})|.
$$
Here $Fix_C(B)$ is a set of all fixed points of $B$ on $C$.
\end{thm}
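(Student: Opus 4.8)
The plan is to present the polynomial operator $V$ on $\mathbf{B}_1^+$ as a factor of the integral operator $\mathbf{A}$ on $\mathcal{D}$ and then to build an explicit lift of fixed points. I would first introduce the projection $\pi\colon\mathcal{D}\to\mathbf{B}_1^+$, $\pi(x)=(x_1,x_2,\dots)$ with $x_i=\int_\Omega a_i(t)x(t)\,d\mu$; the two constraints defining $\mathcal{D}$ say precisely that $\pi(x)\in\mathbf{B}_1^+$. Next I would introduce the lift $\Phi\colon\mathbf{B}_1^+\to L_\infty(\Omega)$ by
$$
\Phi(\xb)(t)=\sum_{i_1,\dots,i_m\geq1}x_{i_1}\cdots x_{i_m}\,f_{i_1\dots i_m}(t).
$$
Since $\sup_t|f_{i_1\dots i_m}(t)|\leq M$ and $\sum_{i_1,\dots,i_m}x_{i_1}\cdots x_{i_m}=\bigl(\sum_i x_i\bigr)^m\leq1$, this series converges absolutely and uniformly in $t$, so $\Phi(\xb)$ is a well-defined bounded function (hence in $L_1(\Omega)$, as $\mu$ is finite).

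Two identities then drive the proof. First, comparing the displayed formula with \eqref{A22} shows at once that $\mathbf{A}x=\Phi(\pi(x))$ for every $x\in\mathcal{D}$. Second, integrating $\Phi(\xb)$ against $a_k$, interchanging $\sum$ and $\int$ (legitimate by the same domination together with $a_k\in L_\infty(\Omega)$), and using \eqref{P_ijk}--\eqref{birV} gives
$$
\int_\Omega a_k(t)\,\Phi(\xb)(t)\,d\mu=\sum_{i_1,\dots,i_m}P_{i_1\dots i_m,k}\,x_{i_1}\cdots x_{i_m}=\bigl(V(\xb)\bigr)_k,
$$
that is, $\pi(\Phi(\xb))=V(\xb)$. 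Because the hypermatrix $(P_{i_1\dots i_m,k})$ is stochastic, Lemma \ref{lem1} gives $V(\xb)\in\mathbf{B}_1^+$, so $\Phi(\xb)$ meets both constraints defining $\mathcal{D}$; hence in fact $\Phi\colon\mathbf{B}_1^+\to\mathcal{D}$ (consistent with $\mathbf{A}(\mathcal{D})\subset\mathcal{D}$ from Theorem \ref{thm_operA}).

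With $\mathbf{A}=\Phi\circ\pi$ on $\mathcal{D}$ and $\pi\circ\Phi=V$ on $\mathbf{B}_1^+$ in hand, the conclusion is immediate. Given $\xb\in Fix_{\mathbf{B}_1^+}(V)$, set $x_{\xb}=\Phi(\xb)\in\mathcal{D}$; then $\mathbf{A}x_{\xb}=\Phi(\pi(\Phi(\xb)))=\Phi(V(\xb))=\Phi(\xb)=x_{\xb}$, so $x_{\xb}\in Fix_{\mathcal{D}}(\mathbf{A})$. The assignment $\xb\mapsto x_{\xb}$ is injective: if $\Phi(\xb)=\Phi(\yb)$, applying $\pi$ yields $V(\xb)=V(\yb)$, and since both are fixed points of $V$ this forces $\xb=\yb$. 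Hence $|Fix_{\mathbf{B}_1^+}(V)|\leq|Fix_{\mathcal{D}}(\mathbf{A})|$; equivalently, $\pi\circ\mathbf{A}=\pi\circ\Phi\circ\pi=V\circ\pi$ shows $\pi$ carries $Fix_{\mathcal{D}}(\mathbf{A})$ onto $Fix_{\mathbf{B}_1^+}(V)$ with $\Phi$ as a section. Surjectivity of $V$ is the hypothesis under which Theorem \ref{thm_operA} supplies the invariant convex set $\mathcal{D}'$, and one may note that each $x_{\xb}$ lies in $\mathcal{D}'$, sharpening the estimate to $|Fix_{\mathbf{B}_1^+}(V)|\leq|Fix_{\mathcal{D}'}(\mathbf{A})|$. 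The argument is essentially formal once $\Phi$ is set up; the only steps requiring care are the convergence of the series defining $\Phi(\xb)$ and the swap of sum and integral, and—the real bookkeeping point—verifying $\Phi(\xb)\in\mathcal{D}$ rather than merely in $L_1(\Omega)$, which is exactly where stochasticity of $(P_{i_1\dots i_m,k})$ and Lemma \ref{lem1} enter.
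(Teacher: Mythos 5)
Your proof is correct, and it takes a genuinely different route from the paper's. The paper proves the theorem by invoking the construction from its proof of Theorem \ref{thm_operA}: surjectivity of $V$ yields (via Theorem \ref{prop_2}) a sequence $\{j_k\}$ with $P_{j_k\dots j_k,k}=1$, from which the special convex set $\mathcal D'=\bigcup_r r\cdot conv\{f_{j_k\dots j_k}\}$ is built and the map $\Im$ (your $\pi$) is shown to be a \emph{bijection} $\mathcal D_r\to S_r$ intertwining ${\bf A}$ and $V$; this gives the equality $|Fix_{{\bf B}_1^+}(V)|=|Fix_{\mathcal D'}({\bf A})|$ and hence the inequality. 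You instead build an explicit section $\Phi$ of $\pi$ defined on all of ${\bf B}_1^+$, verify ${\bf A}=\Phi\circ\pi$ and $\pi\circ\Phi=V$, and transport fixed points directly; injectivity on $Fix(V)$ is immediate. What your approach buys is notable: you never use surjectivity of $V$ (only stochasticity, to get $\Phi(\xb)\in\mathcal D$), so you actually prove a stronger statement than the theorem as hypothesized, and you avoid any dependence on the machinery of Theorem \ref{prop_2}. One small caveat: your closing aside that each $x_{\xb}=\Phi(\xb)$ lies in $\mathcal D'$ is not justified --- $\Phi(\xb)$ is a combination of the mixed-index functions $f_{i_1\dots i_m}$, which need not lie in $conv\{f_{j_k\dots j_k}\}_{k\in\bn}$ --- but this remark is not needed for the stated conclusion, so the proof of the theorem itself is unaffected.
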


\begin{rem}
According to Theorem \ref{thm_fixed}, the integral operator \eqref{intOp}
may have infinitely many fixed points. Moreover, we notice that
$\mathcal D$ is not compact, therefore,  we can not use the well-known properties
of non-linear integral operators acting on compact sets.
\end{rem}

Natural question arises: does there exist kernel $K\in L_1(\Omega^{m+1})$ given by \eqref{yadro}
such that hypermatrix $(P_{i_1i_2\dots i_m,k})_{i_1i_2\dots i_m,k\in\mathbb{N}}$ defined by \eqref{P_ijk} would be stochastic?
The following example gives positive answer this question.

\begin{exm} Let $\Omega=[0,1]$. We define the following functions
$$
a_n(t)=\left\{
\begin{array}{ll}
2^{-n+1}t^{n-1}, & t\in[0,1),\\[2mm]
0, & t=1.
\end{array}
\right.
$$
and
$$
f_{i_1i_2\dots i_m,k}(t)=\frac{2-t}{2m}\sum_{j\in\{i_1,i_2,\dots,i_m\}}b_{j}(t),
$$
where
$$
b_n(t)=\left\{
\begin{array}{ll}
2^{n}\left(t-\frac{k}{2^{n-1}}\right), & t\in\bigcup\limits_{k=0}^{2^{n-1}-1}\left[\frac{k}{2^{n-1}},\frac{2k+1}{2^n}\right],\\[5mm]
2^{n}\left(\frac{2k+1}{2^{n-1}}-t\right), & t\in\bigcup\limits_{k=0}^{2^{n-1}-1}\left[\frac{2k+1}{2^{n}},\frac{k+1}{2^{n-1}}\right].
\end{array}
\right.
$$
It is clear that $\sup_{t\in\Omega}|f_{i_1i_2\dots i_m,k}(t)|\leq2$. Then we have
$$
0\leq K(u_1,\dots,u_m,t)\leq\left\{
\begin{array}{ll}
2, & \mbox{if }\ \prod_{i=1}^m(u_i-1)\neq0,\\[2mm]
0, & \mbox{if }\ \prod_{i=1}^m(u_i-1)=0.
\end{array}
\right.
$$
Now we are going to check that $(P_{i_1i_2\dots i_m,k})_{i_1i_2\dots i_m,k\in\mathbb{N}}$ is stochastic.

We notice that
$f_{i_1\dots i_m,k}=f_{i_{\pi(1)}\dots i_{\pi(m)},k}$ for any permutation $\pi$ of $(1,2,\dots,m)$.
Non negativity of $a_k$ and $f_{i_1\dots i_m}$ implies that $P_{i_1i_2\dots i_m,k}\geq0$.
Furthermore, we obtain
\begin{eqnarray*}
\sum_{k\geq1}P_{i_1i_2\dots i_m,k}&=&\int_\Omega\sum_{k\geq1}a_k(t)f_{i_1\dots i_m}(t)dt\\
&=&\frac{1}{m}\int_\Omega\sum_{j\in\{i_1,\dots,i_m\}}b_j(t)dt\\
&=&\frac{1}{m}\sum_{j\in\{i_1,\dots,i_m\}}\int_\Omega b_j(t)dt\\
&=&1,
\end{eqnarray*}
which together with $P_{i_1i_2\dots i_m,k}\geq0$ yield that $(P_{i_1i_2\dots i_m,k})_{i_1i_2\dots i_m,k\in\mathbb{N}}$
is stochastic.
\end{exm}

\section{Polynomial stochastic operators}

We recall some necessary notations.
Let $E$ be a subset of $\bn$. For a given $r\geq0$ we denote
$$
S_r^{E} = \left\{\textbf{x}=(x_i)_{i \in E}\in \mathbb{R}^{E}\ : \ x_i\geq 0,\quad \sum\limits_{i\in E}x_i = r\right\},
$$
and
$$
{\bf B}_1^+=\bigcup_{r\in[0,1]}S_r^E.
$$
For the sake of convenience, we always write $S^E$ instead of $S^E_1$.
In what follows, by $\eb_i$ we denote the standard basis in $S^E$,
i.e. $\eb_i=(\d_{ik})_{k\in E}$ ($i\in E$), where $\d_{ij}$ is the
Kroneker delta.

Let $\mathcal P=(P_{i_1\dots i_m,k} )_{i_1,...,i_m,k \in E}$ be a $m+1$-ordered
hypermatrix. An $m+1$-ordered hypermatrix $(P_{i_1\dots i_m,k} )_{i_1,...,i_m,k \in E}$
is called {stochastic} if for any $i_1,i_2,\dots,i_m,k\in E$ it hold
\begin{equation}\label{Pikki}
P_{i_1\dots i_m,k}\geq0,\ \ \ \sum_{n\in E}P_{i_1\dots i_m,n}=1.
\end{equation}
For a given $m+1$-ordered stochastic hypermatrix we define operator $V$ on ${\bf B}_1^+$ as
follows:
\begin{equation}\label{ikkiV}
(V(\xb))_k = \sum\limits_{i_{1},i_2,\dots,i_m\in E}P_{i_1i_2\dots i_m,k}x_{i_1}x_{i_2}\dots x_{i_m}, k\in E.
\end{equation}
Operator \eqref{Pikki},\eqref{ikkiV}
is called
{\it $m$-ordered polynomial stochastic operator} (in short $m$-ordered PSO). Without loss of generality we always assume that
$P_{i_1\dots i_m,k}=P_{i_{\pi(1)}\dots i_{\pi(m)},k}$ for any permutation $\pi$ of $(1,2,\dots,m)$.

\begin{lem}\label{lem1}
Let $V$ be $m$-ordered PSO. Then it hold:
$$
V(S^E)\subset S^E,\ \ \ \ V({\bf B}_1^+\setminus S^E)\subset{\bf B}_1^+\setminus S^E.
$$
\end{lem}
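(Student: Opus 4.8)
The plan is to prove the two inclusions separately, starting from the elementary observation that for $\xb\in{\bf B}_1^+$ with $\sum_{i\in E}x_i=r$ one has
$$
\sum_{k\in E}(V(\xb))_k=\sum_{k\in E}\sum_{i_1,\dots,i_m\in E}P_{i_1\dots i_m,k}x_{i_1}\dots x_{i_m}.
$$
First I would justify interchanging the two summations (Tonelli/Fubini for nonnegative series, using $P_{i_1\dots i_m,k}\geq0$ and $x_{i_j}\geq0$), so that the right-hand side becomes
$$
\sum_{i_1,\dots,i_m\in E}x_{i_1}\dots x_{i_m}\Bigl(\sum_{k\in E}P_{i_1\dots i_m,k}\Bigr).
$$
By the stochasticity condition \eqref{Pikki}, the inner sum equals $1$ for every multi-index, so the whole expression collapses to
$$
\sum_{i_1,\dots,i_m\in E}x_{i_1}\dots x_{i_m}=\Bigl(\sum_{i\in E}x_i\Bigr)^m=r^m.
$$
Also each coordinate $(V(\xb))_k$ is a sum of nonnegative terms, hence nonnegative, so $V(\xb)\in S^E_{r^m}$.

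From this computation both claims follow immediately. If $\xb\in S^E$, then $r=1$, so $\sum_k(V(\xb))_k=1^m=1$ and $V(\xb)\in S^E$, giving $V(S^E)\subset S^E$. If $\xb\in{\bf B}_1^+\setminus S^E$, then $r\in[0,1)$, hence $r^m\in[0,1)$ as well (here $m\geq1$), so $\sum_k(V(\xb))_k=r^m<1$, which means $V(\xb)\in{\bf B}_1^+\setminus S^E$. This also shows $V({\bf B}_1^+)\subset{\bf B}_1^+$ as a byproduct, since $r^m\in[0,1]$ whenever $r\in[0,1]$.

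The only genuine point requiring care — the ``main obstacle,'' such as it is — is the rearrangement of the double series and the identification $\sum_{i_1,\dots,i_m}x_{i_1}\dots x_{i_m}=(\sum_i x_i)^m$ in the case $E$ is infinite: one must note that all terms are nonnegative so the iterated sums, the double (multiple) sum, and the product of sums all coincide in $[0,\infty]$, and here they are finite since $r\leq1$. Everything else is a direct substitution of the two defining properties in \eqref{Pikki}. I would also remark that the symmetry assumption $P_{i_1\dots i_m,k}=P_{i_{\pi(1)}\dots i_{\pi(m)},k}$ plays no role here; it is the normalization $\sum_n P_{i_1\dots i_m,n}=1$ that does all the work.
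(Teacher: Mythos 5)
Your proof is correct and follows essentially the same route as the paper: compute $\sum_k(V(\xb))_k=r^m$ for $\xb\in S_r^E$ using stochasticity, note nonnegativity of coordinates, and conclude $V(S_r^E)\subset S_{r^m}^E$, from which both inclusions follow. Your extra care about the Tonelli-type rearrangement and the Cauchy product identity is a welcome (and correct) filling-in of details the paper leaves implicit.
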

\begin{proof}
Let $V$ be $m$-ordered PSO and $r\in[0,1]$. Take an arbitrary $\xb\in S_r^E$. It is clear that
$(V(\xb))_k\geq0$ for all $k\geq1$. Furthermore, using stochasticity of $\mathcal P$
we find
$$
\sum\limits_{k=1}^\infty(V(\xb))_k=\sum\limits_{i_{1},i_2,\dots,i_m\in E}x_{i_1}x_{i_2}\dots x_{i_m}=r^m,
$$
which implies $V(\xb)\in S_{r^m}^E$. Hence, we infer that
$$
V(S^E)\subset S^E,\ \ \ \ V({\bf B}_1^+\setminus S^E)\subset{\bf B}_1^+\setminus S^E.
$$
The proof is complete.
\end{proof}
The following result plays a crucial role in our further investigation.
\begin{lem}\label{lem2}
Let $V$ be $m$-ordered PSO. Then $V$ is surjective on ${\bf B}_1^+$ iff it is surjective
on $S^E$.
\end{lem}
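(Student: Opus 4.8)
The plan is to prove the two implications separately, the forward one being essentially trivial and the backward one being the substance of the lemma.

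\textbf{Forward direction.} Suppose $V$ is surjective on $\bal1$. Take any $\yb\in S^E$. By surjectivity there exists $\xb\in\bal1$ with $V(\xb)=\yb$. Since $\yb\in S^E$, Lemma \ref{lem1} forces $\xb\in S^E$: indeed if $\xb\in\bal1\setminus S^E$ then $V(\xb)\in\bal1\setminus S^E$, contradicting $V(\xb)=\yb\in S^E$. Hence every point of $S^E$ has a preimage in $S^E$, i.e. $V$ is surjective on $S^E$.

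\textbf{Backward direction.} Suppose $V$ is surjective on $S^E$; we must show it is surjective on all of $\bal1$, i.e. that every $\yb\in S_r^E$ with $r\in[0,1)$ is hit. The idea is to use the homogeneity of $V$: from the computation in Lemma \ref{lem1}, $V$ maps $S_r^E$ into $S_{r^m}^E$, and more precisely $V(\l\xb)=\l^m V(\xb)$ for scalars $\l\ge 0$. So, given $\yb\in S_r^E$ with $0<r<1$, write $r=s^m$ for the unique $s=r^{1/m}\in(0,1)$, and consider the normalized vector $\yb/r\in S^E$ (note $\|\yb/r\|_1=1$). By surjectivity on $S^E$ there is $\zb\in S^E$ with $V(\zb)=\yb/r$. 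Then set $\xb=s\,\zb$. We have $\|\xb\|_1=s<1$ so $\xb\in\bal1$, and $V(\xb)=V(s\zb)=s^m V(\zb)=r\cdot(\yb/r)=\yb$. This handles all $r\in(0,1]$. Finally the case $r=0$: $S_0^E=\{\mathbf 0\}$, and $V(\mathbf 0)=\mathbf 0$ since each monomial $x_{i_1}\cdots x_{i_m}$ vanishes at the origin (here $m\ge 2$, or at least $m\ge 1$, so there is at least one factor); thus $\mathbf 0$ is in the range. Combining, $V(\bal1)=\bal1$.

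\textbf{Main obstacle.} The only delicate point is the scaling/homogeneity step and making sure the bookkeeping with the $\ell_1$-norms is correct: one must check that $\yb/r$ genuinely lands in $S^E$ (it does, since dividing an $S_r^E$ vector by $r$ gives $\ell_1$-norm $1$ and preserves nonnegativity) and that the rescaled preimage $s\zb$ stays inside $\bal1$ (it does, with norm $s=r^{1/m}<1$). There is no real analytic difficulty here — no compactness, continuity, or fixed-point input is needed — just the observation that $V$ is positively homogeneous of degree $m$ together with Lemma \ref{lem1}. I would also remark that this reduction is exactly why the later results can be phrased in terms of surjectivity on the genuine simplex $S^E$ rather than on the truncated cone $\bal1$.
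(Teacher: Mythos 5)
Your proof is correct and follows essentially the same route as the paper: the forward direction via Lemma \ref{lem1}, and the backward direction via the degree-$m$ homogeneity $V(\lambda\xb)=\lambda^m V(\xb)$ together with the rescaling $\yb\mapsto \yb/r$, $\zb\mapsto r^{1/m}\zb$ (the paper phrases this setwise as $V(S^E_r)=V(rS^E)=r^mS^E=S^E_{r^m}$, plus $V(\mathbf 0)=\mathbf 0$, but it is the same argument). No further comment needed.
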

\begin{proof}
"Only if" part immediately follows from Lemma \ref{lem1}. So, we will prove "if" part.

Assume that $V$ is surjective PSO on $S^E$.
We show $V(S_r^E)=S_{r^m}^E$ for any $r\in[0;1)$. One can see that $V({\bf 0})={\bf 0}$. Due to this fact it is enough to show
$V(S_r^E)=S_{r^m}^E$ for $r\in(0,1)$.

Let $r\in(0,1)$. We define an operator $T_r:S_r^E\to S^E$ as follows $T_r(\xb)=r^{-1}\xb$ for all $\xb\in S_r^E$.
We notice that $T_r$ is bijection. Then, we have
\begin{eqnarray*}
V(S_r^E)&=&V(rT_r(S^E_r))\\
&=&V(rS^E)\\
&=&r^mV(S^E)\\
&=&r^mS^E\\
&=&S_{r^m}^E.
\end{eqnarray*}
Hence, keeping in mind $V({\bf 0})={\bf 0}$, from the last one we infer
$V({\bf B}_1^+\setminus S^E)={\bf B}_1^+\setminus S^E$, which together with $V(S^E)=S^E$ implies
$V({\bf B}_1^+)={\bf B}_1^+$.
The proof is complete.
\end{proof}

\begin{rem}
We notice that the last Lemma can be formulated as follows:
{\it PSO $V$ is surjective on ${\bf B}_1^+$ iff it is surjective
on ${\bf B}_1^+\setminus S^E$.}
\end{rem}

\begin{rem}
Thanks to Lemma \ref{lem1} and lemma \ref{lem2} we may consider PSO $V$ only on $S^E$.
\end{rem}

By \textit{support} of $ \xb = (x_{i})_{i\in E}\in S^E $ we mean a
set $ supp(\xb) = \left\{ i \in E \ : \  x_{i} \neq 0 \right\}. $

We define the face $ \G_{A} $ of $S^{E}$ by setting $ \G_{A} = conv\{\eb_{i} \}_{i \in A} $, here
$conv( B )$ stands for the convex hull of a set $ B $. Let
$$
int \G_{A} = \{ \xb \in \G_A : x_{i}>0,\ \forall \ i \in A \}
$$
be the relative interior of $ \G_{A} $.
Recall that two vectors $ \xb,\yb\in S^E $
are called \textit{orthogonal} (denoted by $ \xb \perp \yb $) if $
supp(\xb) \cap supp(\yb) = \emptyset $. If $ \xb,\yb\in S^E$, then
one can see that  $ \xb \perp \yb $ if and only if $x_ky_k=0$ for all $k\in E$. In what follows, we denote $\xb\circ\yb=\sum_{k\in E}x_ky_k$.

\begin{defn}
	A $m$-ordered PSO $V$ defined on $ S^{E} $ is called \textit{orthogonal
		preserving PSO (OP PSO)} if for any $ \xb,\yb \in S^{E} $ with $ \xb
	\perp \yb $ one has $ V(\xb) \perp V(\yb) $.
\end{defn}
We remind that in \cite{ME18} it was fully described the set of all OP and surjective PSOs when $E\subset\mathbb N$ is finite.

\begin{thm}\cite{ME18}\label{thm_fin}
	Let
	$E=\{1,2,\dots,n\}$ and $V$ be a $m$-ordered PSO on $ S^{E} $. Then the following statements are
	equivalent:
	\begin{itemize}
		\item[(i)] $V$ is orthogonal preserving;
		\item[(ii)] $V$ is surjective;
		\item[(iii)] $V$ satisfy the following conditions:
		\begin{itemize}
			\item[(1)] $V^{-1}(\eb_{i}) = \eb_{\pi(i)} \textmd{ for any } i \in E
			$,
			\item[(2)] $V^{-1}(int \G_{\eb_{i_{1}}\eb_{i_{2}}})=int \G_{\eb_{\pi(i_{1})}\eb_{\pi(i_{2})}}  \textmd{ for any } i_{1},i_{2} \in E, $ \\
			\vdots
			\item[($m$)] $ V^{-1}(int \G_{\eb_{i_{1}} \cdots \eb_{i_{m}}})=int \G_{\eb_{\pi(i_{1})} \cdots \eb_{\pi(i_{m})}} \textmd{ for any } i_{1},\dots, i_{m} \in E, $
		\end{itemize}
		for some permutation $\pi$ of $E$.
	\end{itemize}
\end{thm}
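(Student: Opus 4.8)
The plan is to prove the cited theorem (Theorem~\ref{thm_fin}) by establishing the cycle of implications $(iii)\Rightarrow(i)\Rightarrow(ii)\Rightarrow(iii)$, so that all three become equivalent. Throughout I work on the finite simplex $S^E$ with $E=\{1,\dots,n\}$, and I use the symmetry convention $P_{i_1\dots i_m,k}=P_{i_{\pi(1)}\dots i_{\pi(m)},k}$ already assumed for $m$-ordered PSOs.

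\textbf{Step 1: $(iii)\Rightarrow(i)$.} Suppose $V$ satisfies the chain of conditions (1)--($m$) for some permutation $\pi$ of $E$. Condition (1) says that $V(\eb_i)$ is the vertex $\eb_{\pi(i)}$, which forces, for each $i$, that $P_{i\dots i,k}=\d_{k,\pi(i)}$. More generally, condition ($\ell$) says $V$ maps $\mathrm{int}\,\G_{\eb_{i_1}\cdots\eb_{i_\ell}}$ \emph{onto} (indeed bijectively, via the preimage description) $\mathrm{int}\,\G_{\eb_{\pi(i_1)}\cdots\eb_{\pi(i_\ell)}}$; in particular the image of the whole face $\G_{\eb_{i_1}\cdots\eb_{i_\ell}}$ lies in $\G_{\eb_{\pi(i_1)}\cdots\eb_{\pi(i_\ell)}}$, so $\mathrm{supp}(V(\xb))\subseteq\pi(\mathrm{supp}(\xb))$ for every $\xb$. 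Taking $\xb\perp\yb$ gives $\mathrm{supp}(\xb)\cap\mathrm{supp}(\yb)=\emptyset$, hence $\pi(\mathrm{supp}(\xb))\cap\pi(\mathrm{supp}(\yb))=\emptyset$ since $\pi$ is injective, hence $V(\xb)\perp V(\yb)$. So $V$ is orthogonal preserving.

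\textbf{Step 2: $(i)\Rightarrow(ii)$.} This is the heart of the argument. Assume $V$ is OP. First analyze the vertices: from $\eb_i\perp\eb_j$ for $i\ne j$ we get $V(\eb_i)\perp V(\eb_j)$, so the supports of the $n$ points $V(\eb_1),\dots,V(\eb_n)\in S^E$ are pairwise disjoint nonempty subsets of the $n$-element set $E$; a counting/pigeonhole argument forces each to be a singleton, so $V(\eb_i)=\eb_{\sigma(i)}$ for a permutation $\sigma$ of $E$. Next, an inductive argument on the face dimension: knowing $V$ respects orthogonality, one shows that $V$ maps $\mathrm{int}\,\G_A$ into $\mathrm{int}\,\G_{\sigma(A)}$ for every $A\subseteq E$ --- the inclusion $\mathrm{supp}(V(\xb))\subseteq\sigma(\mathrm{supp}(\xb))$ follows because any $j\notin\sigma(A)$ satisfies $\eb_j\perp\eb_{\sigma(i)}=V(\eb_i)$ for all $i\in A$, and one leverages orthogonality against the vertices together with the polynomial (monomial-sum) form of $V$ to conclude the $j$-th coordinate of $V(\xb)$ vanishes; surjectivity onto $\mathrm{int}\,\G_{\sigma(A)}$ then comes from a dimension count, since $V|_{\G_A}$ is a polynomial map between simplices of the same dimension whose boundary maps into the boundary, so it cannot miss interior points (degree/connectedness argument). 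Applying this with $A=E$ yields $V(S^E)=S^E$, i.e. surjectivity. I expect this step --- specifically, upgrading "support-preserving" to "surjective on each face" via a degree or invariance-of-domain type argument for polynomial self-maps of the simplex --- to be the main obstacle, and the place where finiteness of $E$ is genuinely used.

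\textbf{Step 3: $(ii)\Rightarrow(iii)$.} Assume $V$ is surjective. Surjectivity onto the vertices: each $\eb_k$ has a preimage $\xb$; since $V$ preserves the total-mass decomposition only on $S^E$ here we already know $V(S^E)\subseteq S^E$ from Lemma~\ref{lem1}, and one argues that the preimage of an extreme point of $S^E$ must itself be extreme --- if $\xb$ were a nontrivial convex combination then so would be $V(\xb)$, contradicting extremality of $\eb_k$ (using that a polynomial/affine-on-segments obstruction does not arise because $V$ restricted to a segment is a polynomial curve whose image lies in $S^E$, and an extreme point of a convex set can only be hit by an extreme point when the map is such that...). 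More carefully, one counts: $V$ sends the $n$ vertices to points whose supports, by surjectivity and a rank argument, must exhaust all singletons, giving $V(\eb_i)=\eb_{\pi(i)}$ for a permutation $\pi$; this is condition (1). For conditions (2)--($m$), one argues inductively that surjectivity forces $V^{-1}(\mathrm{int}\,\G_{\eb_{i_1}\cdots\eb_{i_\ell}})=\mathrm{int}\,\G_{\eb_{\pi(i_1)}\cdots\eb_{\pi(i_\ell)}}$: the inclusion $\supseteq$ follows from the monomial form of $V$ together with condition (1) constraining which $P_{i_1\dots i_m,k}$ can be nonzero, and the reverse inclusion $\subseteq$ follows from surjectivity and dimension counting on faces, exactly as in Step 2. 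Collecting these gives (iii) and closes the cycle. The verification that the various monomial coefficients $P_{i_1\dots i_m,k}$ are forced to vanish off the "diagonal" pattern dictated by $\pi$ is routine bookkeeping once the support-preservation is in hand, so I would not grind through it.
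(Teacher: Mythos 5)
The paper never proves this statement: Theorem~\ref{thm_fin} is quoted from \cite{ME18}, so the only in-paper object to compare against is the proof of its infinite-dimensional analogue, Theorem~\ref{thm_sur_op}. Your plan mirrors that proof almost exactly (the same cycle of implications up to orientation, the pigeonhole on $V(\eb_1),\dots,V(\eb_n)$, the support-containment $supp(V(\xb))\subseteq\sigma(supp(\xb))$ obtained by testing orthogonality against vertices, the face-by-face induction for surjectivity, and the rigidity of $V^{-1}(\eb_k)$ for (ii)$\Rightarrow$(iii), which is Proposition~\ref{prop_1} here). So the architecture is right. But there is one genuine gap, and it sits exactly at the step you yourself flag as the heart of the matter: in Step~2 you conclude surjectivity onto $int\,\G_{\sigma(A)}$ from ``$V|_{\G_A}$ is a polynomial map between simplices of the same dimension whose boundary maps into the boundary, so it cannot miss interior points.'' That implication is false for continuous maps in general (a constant map onto a boundary vertex maps boundary into boundary and misses every interior point). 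What you actually need is that $V|_{\partial\G_A}:\partial\G_A\to\partial\G_{\sigma(A)}$ has nonzero Brouwer degree --- equivalently, that the null-homotopy of $V|_{\partial\G_A}$ forced by a missed interior point contradicts an inductively computed degree $\pm1$ --- or, alternatively, that $V$ is open on $int\,\G_A$. Neither is supplied, and neither follows from ``boundary into boundary'' plus the inductive surjectivity of the boundary restriction without an explicit degree (or, in the one-dimensional base case, intermediate value) computation. The paper's own proof of Theorem~\ref{thm_sur_op} elides the same point (``the boundary point goes to boundary one''), but that does not discharge it for you: this is the nontrivial content of (i)$\Rightarrow$(ii).

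Two smaller repairs. In Step~1 you deduce $supp(V(\xb))\subseteq\pi(supp(\xb))$ for \emph{every} $\xb$ directly from conditions (1)--($m$), but those conditions only govern faces with at most $m$ vertices; you must first apply condition~($m$) to the face spanned by $\{i_1,\dots,i_m\}$ to extract $P_{i_1\dots i_m,k}=0$ whenever $k$ lies outside the corresponding $\pi$-image, and only then read off the support containment for arbitrary $\xb$ from the monomial expansion (this is precisely Theorem~\ref{thm_OP_inf}); there is also a harmless $\pi$ versus $\pi^{-1}$ orientation slip, since condition~($\ell$) says $V$ carries $int\,\G_{\eb_{\pi(i_1)}\cdots\eb_{\pi(i_\ell)}}$ onto $int\,\G_{\eb_{i_1}\cdots\eb_{i_\ell}}$. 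In Step~3 the digression about extreme points trails off and should be deleted in favour of the clean argument: any $\xb\in V^{-1}(\eb_k)$ forces $P_{i_1\dots i_m,k}=1$ for all $i_1,\dots,i_m\in supp(\xb)$, hence $V(\eb_i)=\eb_k$ for every $i\in supp(\xb)$; disjointness of these supports over $k$ plus surjectivity and $|E|=n$ forces singletons, which is exactly Proposition~\ref{prop_1} combined with counting.
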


\begin{rem} We notice that if the hypermatrix $\cp$ is given by the
	cubic matrix $(P_{ij,k})$, then the associated PSO reduces to the
	\textit{quadratic stochastic operator (QSO)} given by
	\begin{eqnarray*}
		(V(\xb))_k=\sum\limits_{i,j=1}^\ifn P_{ij,k}x_ix_j, \ \ k\in E.
	\end{eqnarray*}
In \cite{FOA} it was studied OP and surjective QSOs on infinite dimensional simplex.
\end{rem}

\section{Surjectivity and Orthogonal Preservative Polynomial Stochastic Operators}

In this section, we prove an analogous result in Theorem \ref{thm_fin} for infinite dimensional case.
Here, instead of a $m$-ordered PSO $V$ defined on $ S^{E} $ for arbitrary $ E \subset \bn $, we will consider $V$ defined on $ S^{\bn} $ (in short $ S $).

We need auxiliary results to go further in this section.
First, let us describe OP PSO as follows.

\begin{thm}\label{thm_OP_inf}
	Let $V$ be $m$-ordered PSO such that $V(\eb_{i}) = \eb_{i} $ for every $ i \in \bn $. The following statement are equivalent
	\begin{itemize}
		\item[(i)] $V$ is an OP;
		\item[(ii)] For any $i_1,\dots,i_m,k\in\mathbb N$ one has $P_{i_1\dots i_m,k} = 0$ if $k\notin\{i_1,\dots,i_m\}$, ;\\
		\item[(iii)] For any $\xb\in S$ and $k\geq1$ it holds $(V(\xb))_{k} = x_k\mathcal A_k(\xb)$, where
\begin{equation}\label{A_k}
\mathcal A_k(\xb)=x_k^{m-1}+\sum\limits_{j=1}^{m-1}\left(\begin{array}{l}
m\\
j
\end{array}
\right)
x_k^{m-1-j}\sum\limits_{i_{m+1-j},\dots,i_m\neq k}
 P_{k\dots ki_{m+1-j}\dots i_m,k}x_{i_{m+1-j}}\cdots x_{i_{m}}.
 \end{equation}
	\end{itemize}
\end{thm}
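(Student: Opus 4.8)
The plan is to prove the cycle of implications $(i)\Rightarrow(ii)\Rightarrow(iii)\Rightarrow(i)$, exploiting the fixed-point hypothesis $V(\eb_i)=\eb_i$ at every turn.

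For $(i)\Rightarrow(ii)$: suppose $V$ is orthogonal preserving and fix indices $i_1,\dots,i_m$ together with some $k\notin\{i_1,\dots,i_m\}$. I want to show $P_{i_1\dots i_m,k}=0$. The idea is to evaluate $V$ at a point of the form $\xb=\sum_{\ell}t_\ell\eb_{i_\ell}$ (a convex combination supported on $\{i_1,\dots,i_m\}$) and compare with $V(\eb_k)=\eb_k$. Since $k\notin\mathrm{supp}(\xb)$, we have $\xb\perp\eb_k$, so $V(\xb)\perp V(\eb_k)=\eb_k$, which forces $(V(\xb))_k=0$ for every such $\xb$. Expanding $(V(\xb))_k=\sum_{j_1,\dots,j_m\in\{i_1,\dots,i_m\}}P_{j_1\dots j_m,k}t_{j_1}\cdots t_{j_m}\equiv 0$ as a polynomial identity in the free parameters $(t_\ell)$ on the simplex forces all coefficients to vanish; in particular $P_{i_1\dots i_m,k}=0$. (One must be mildly careful: the variables $t_\ell$ range over a simplex rather than freely, but a nonzero polynomial cannot vanish identically on a simplex with nonempty interior, so the conclusion stands after the usual symmetrization using $P_{i_1\dots i_m,k}=P_{i_{\pi(1)}\dots i_{\pi(m)},k}$.)

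For $(ii)\Rightarrow(iii)$: this is a direct computation. Starting from $(V(\xb))_k=\sum_{i_1,\dots,i_m}P_{i_1\dots i_m,k}x_{i_1}\cdots x_{i_m}$, condition $(ii)$ says the only surviving terms are those where $k$ appears among $i_1,\dots,i_m$. Grouping the $m$-tuples by how many of their entries equal $k$ — say exactly $m-j$ of them equal $k$ and the remaining $j$ entries $i_{m+1-j},\dots,i_m$ are $\neq k$ — the multinomial coefficient $\binom{m}{j}$ counts the positions, each such term carries $x_k^{m-j}$ from the $k$-entries and $x_{i_{m+1-j}}\cdots x_{i_m}$ from the rest, and factoring one $x_k$ out yields exactly $(V(\xb))_k=x_k\mathcal A_k(\xb)$ with $\mathcal A_k$ as in \eqref{A_k}; the $j=0$ term gives the leading $x_k^{m-1}$.

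For $(iii)\Rightarrow(i)$: take $\xb\perp\yb$ in $S$. For each $k$, at most one of $x_k,y_k$ is nonzero. Using $(V(\xb))_k=x_k\mathcal A_k(\xb)$ and $(V(\yb))_k=y_k\mathcal A_k(\yb)$, the product $(V(\xb))_k(V(\yb))_k=x_ky_k\,\mathcal A_k(\xb)\mathcal A_k(\yb)=0$ since $x_ky_k=0$ for all $k$. Hence $V(\xb)\circ V(\yb)=0$, i.e. $V(\xb)\perp V(\yb)$, so $V$ is OP. The main obstacle is the first implication — making the "vanishing on the simplex implies vanishing coefficients" argument clean and handling the combinatorics of which monomials a given $P_{i_1\dots i_m,k}$ contributes to; the other two implications are essentially bookkeeping.
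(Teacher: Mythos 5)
Your proof is correct and follows the same cycle (i)$\Rightarrow$(ii)$\Rightarrow$(iii)$\Rightarrow$(i) as the paper; your (ii)$\Rightarrow$(iii) and (iii)$\Rightarrow$(i) are essentially identical to the paper's. The one place you diverge is (i)$\Rightarrow$(ii): you test $V$ on vectors supported on $\{i_1,\dots,i_m\}$ and extract the coefficient $P_{i_1\dots i_m,k}$ by a polynomial-identity argument, whereas the paper takes a single $\xb$ with $x_k=0$ and $x_i>0$ for all $i\neq k$, notes $\xb\perp\eb_k$, and simply bounds $(V(\xb))_k\geq P_{i_1\dots i_m,k}x_{i_1}\cdots x_{i_m}>0$, using that all entries of the stochastic hypermatrix are nonnegative. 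The paper's version is shorter and sidesteps the caveat you flag: as literally stated, ``a nonzero polynomial cannot vanish identically on a simplex with nonempty interior'' is not quite right, since the simplex $\{t_\ell\geq 0,\ \sum_\ell t_\ell=1\}$ has empty interior in the ambient space and, e.g., $\sum_\ell t_\ell-1$ vanishes on it; to repair this you should either use homogeneity of degree $m$ to pass from the relative interior of the simplex to the open positive cone, or, more simply, use nonnegativity of the $P$'s so that a vanishing sum of nonnegative terms forces each term to vanish. Two minor bookkeeping points: in (ii)$\Rightarrow$(iii) you should record explicitly that $P_{k\dots k,k}=1$ (from $\sum_i P_{k\dots k,i}=1$ together with (ii)), which is what makes the leading term of $\mathcal A_k$ equal to $x_k^{m-1}$ with coefficient exactly $1$; and the coefficient extraction in (i)$\Rightarrow$(ii) does rely on the standing symmetry assumption $P_{i_1\dots i_m,k}=P_{i_{\pi(1)}\dots i_{\pi(m)},k}$, which you correctly invoke.
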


\begin{proof}
	We will prove the statement as follow: (i)$\Rightarrow$(ii)$\Rightarrow$(iii)$\Rightarrow$(i).
	
	(i)$\Rightarrow$(ii). Let us assume that $V$ is an OP. Suppose that there exist
$i_1,\dots,i_m\in\mathbb N$ such that $P_{i_1\dots i_m,k}\neq0$ for some $k\notin\{i_1,\dots,i_m\}$.
	Choose a vector $\xb\in S $ as follows
	\begin{eqnarray}\label{eqn_desOP_1}
	\xb= \left( x_{1}, \dots, x_{k-1}, 0, x_{k+1}, \dots \right) \textmd{ such that } x_{i} > 0 \textmd{ for all } i\neq k.
	\end{eqnarray}
It is clear that $\xb\perp\eb_k$. Hence, due to orthogonality preserveness of $V$ one has $V(\xb)\perp V(\eb_k)$.
On the other hand, we have
\begin{eqnarray*}
(V(\xb))_{k}& = &\sum\limits_{j_1,\dots, j_m\in\mathbb N}P_{j_1\dots j_m,k}x_{j_1}\cdots x_{j_{m}}\\
&\geq&P_{i_1\dots i_m,k}x_{i_1}\cdots x_{i_m}>0,
	\end{eqnarray*}
which is contradiction to $V(\xb)\perp V(\eb_k)$. So, we conclude that
for any $i_1,\dots,i_m\in\mathbb N$ it holds $P_{i_1\dots i_m,k}=0$ if $k\notin\{i_1,i_2,\dots,i_m\}$.\\
(ii)$\Rightarrow$(iii). For any $\xb\in S$ and $k\geq1$ we have
\begin{equation}\label{V1234}
(V(\xb))_k=\sum_{i_1,\dots,i_m\in E:\atop{k\in\{i_1,\dots,i_m\}}}P_{i_1\dots i_m,k}x_{i_1}\cdots x_{i_m}.
\end{equation}
Since $\sum_{i=1}^\infty P_{kk\dots k,i}=1$ and $P_{kk\dots k,i}=0$ for all $i\neq k$ one gets $P_{kk\dots k,k}=1$. Keeping in
mind this fact and noting $P_{i_1\dots i_m,k}=P_{i_{\pi(1)}\dots i_{\pi(m)},k}$ for any permutation
$\pi$ of $(1,2,\dots,m)$, from \eqref{V1234} we immediately get (iii).\\
(iii)$\Rightarrow$(i). Pick any pair of orthogonal vectors $\xb,\yb\in S $. Then
by definition we have $x_{k}y_{k} =0$ for all $k\geq1$.

On the other hand,
\begin{eqnarray}
	V(\xb) \circ V(\yb) &=&
	\sum\limits_{k=1}^{\ifn}
	x_{k}\mathcal A_k(\xb)y_{k}\mathcal A_k(\yb)\nonumber \\
	&=&\sum\limits_{k=1}^{\ifn}
	x_{k}y_{k}\mathcal A_k(\xb)\mathcal A_k(\yb)\nonumber \\
	&=& 0 \nonumber
	\end{eqnarray}
	which shows that $V$ is an OP PSO.
\end{proof}

\begin{cor}\label{cor_OP_S_inf}
	Let $V$ be $m$-ordered OP PSO such that $V(\eb_{i}) = \eb_{i} $ for all $ i \in \bn $. Then $V$ satisfy
	\begin{itemize}
		\item[(i)] $ V( \partial S ) \subset \partial S $
		\item[(ii)] $ V( int S ) \subset int S $
	\end{itemize}
\end{cor}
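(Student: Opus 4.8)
The plan is to use the characterization of OP PSOs from Theorem \ref{thm_OP_inf}(iii), namely that $(V(\xb))_k = x_k\mathcal A_k(\xb)$ for every $k$, where $\mathcal A_k(\xb)$ is the polynomial given in \eqref{A_k}. The key structural observation is that $\mathcal A_k(\xb)$ is a sum of nonnegative monomials, and its leading term is $x_k^{m-1}$; hence if $x_k>0$ then $\mathcal A_k(\xb)\geq x_k^{m-1}>0$, while if $x_k=0$ then trivially $(V(\xb))_k=0$. This immediately links the support of $V(\xb)$ to the support of $\xb$.

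For part (i), I would take $\xb\in\partial S$, i.e. $\xb\in S$ with $supp(\xb)\neq\bn$, so there exists $k_0$ with $x_{k_0}=0$. Then $(V(\xb))_{k_0}=x_{k_0}\mathcal A_{k_0}(\xb)=0$, so $supp(V(\xb))\neq\bn$, which means $V(\xb)\in\partial S$ (using Lemma \ref{lem1} to know $V(\xb)\in S$ in the first place). For part (ii), I would take $\xb\in int\, S$, so $x_k>0$ for all $k\in\bn$. Then for each $k$, since $\mathcal A_k(\xb)\geq x_k^{m-1}>0$, we get $(V(\xb))_k=x_k\mathcal A_k(\xb)>0$, hence $V(\xb)\in int\, S$ (again invoking Lemma \ref{lem1} for membership in $S$).

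Concretely I would write: by Theorem \ref{thm_OP_inf} we have $(V(\xb))_k=x_k\mathcal A_k(\xb)$ with $\mathcal A_k(\xb)=x_k^{m-1}+\sum_{j=1}^{m-1}\binom{m}{j}x_k^{m-1-j}\sum_{i_{m+1-j},\dots,i_m\neq k}P_{k\dots k i_{m+1-j}\dots i_m,k}x_{i_{m+1-j}}\cdots x_{i_m}$. All coefficients $P_{\cdots,k}$ are nonnegative and all $x_i\geq0$, so $\mathcal A_k(\xb)\geq x_k^{m-1}$ for every $\xb\in S$. Consequently $(V(\xb))_k=0$ iff $x_k=0$ (the ``if'' is immediate; the ``only if'' follows because $x_k>0$ forces $(V(\xb))_k\geq x_k^m>0$). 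Therefore $supp(V(\xb))=supp(\xb)$, and both claimed inclusions follow at once since $\xb\in\partial S\iff supp(\xb)\subsetneq\bn$ and $\xb\in int\,S\iff supp(\xb)=\bn$, combined with $V(S)\subset S$ from Lemma \ref{lem1}.

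I do not expect any serious obstacle here; the only point requiring a little care is making sure $V(\xb)$ actually lies in $S$ (not merely that its support behaves correctly), which is exactly what Lemma \ref{lem1} provides, and noting that the identity-on-vertices hypothesis $V(\eb_i)=\eb_i$ is what licenses the use of Theorem \ref{thm_OP_inf}(iii). One should also remark that $int\,S$ here means the relative interior $\{\xb\in S: x_i>0\ \forall i\}$ and $\partial S=S\setminus int\,S$, consistent with the notation already fixed in the paper for faces $int\,\G_A$.
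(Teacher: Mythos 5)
Your proposal is correct and is exactly the intended derivation: the paper states this corollary without proof as an immediate consequence of Theorem \ref{thm_OP_inf}(iii), and your observation that $(V(\xb))_k=x_k\mathcal A_k(\xb)$ with $\mathcal A_k(\xb)\geq x_k^{m-1}$ forces $supp(V(\xb))=supp(\xb)$ is precisely the point. Combined with $V(S)\subset S$ from Lemma \ref{lem1}, both inclusions follow as you say.
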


\begin{prop}\label{prop_0}
	Let $V$ be $m$-ordered PSO defined on $S$. For any $ A, B\subset\bn $ one
	has $V(int \G_{A}) \subset int \G_{B}$  if and only if
	$V(\xb^{(0)}) \in int\G_{B}$ for some $\xb^{(0)} \in int\G_{A} $.
\end{prop}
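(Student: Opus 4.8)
The plan is to prove the non-trivial direction (the "if" part) by exploiting the polynomial structure of $V$ together with the continuity of $V$ and the fact that $\G_A$ is itself a simplex on which $V$ restricts. First I would settle the easy direction: if $V(\mathrm{int}\,\G_A)\subset\mathrm{int}\,\G_B$, then trivially $V(\xb^{(0)})\in\mathrm{int}\,\G_B$ for \emph{every} $\xb^{(0)}\in\mathrm{int}\,\G_A$, in particular for some such point, so there is nothing to do.

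For the converse, suppose $V(\xb^{(0)})\in\mathrm{int}\,\G_B$ for some fixed $\xb^{(0)}\in\mathrm{int}\,\G_A$. The key observation is that for $k\notin B$ the coordinate functional $\xb\mapsto (V(\xb))_k=\sum_{i_1,\dots,i_m}P_{i_1\dots i_m,k}x_{i_1}\cdots x_{i_m}$ is a polynomial with \emph{nonnegative} coefficients $P_{i_1\dots i_m,k}\ge 0$, and it vanishes at the point $\xb^{(0)}$, which has \emph{strictly positive} coordinates $x^{(0)}_i>0$ for all $i\in A$. A polynomial with nonnegative coefficients that vanishes at a point with all coordinates (in the relevant index set $A$) strictly positive must have all of its monomials identically zero on $\G_A$: concretely, $(V(\xb^{(0)}))_k=0$ forces $P_{i_1\dots i_m,k}x^{(0)}_{i_1}\cdots x^{(0)}_{i_m}=0$ for every multi-index, hence $P_{i_1\dots i_m,k}=0$ whenever $i_1,\dots,i_m\in A$. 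Therefore $(V(\xb))_k=0$ for \emph{all} $\xb\in\G_A$ and all $k\notin B$, which shows $V(\G_A)\subset\G_B$, and in particular $V(\mathrm{int}\,\G_A)\subset\G_B$.

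It remains to upgrade $V(\mathrm{int}\,\G_A)\subset\G_B$ to $V(\mathrm{int}\,\G_A)\subset\mathrm{int}\,\G_B$, i.e. to show $(V(\xb))_k>0$ for every $k\in B$ and every $\xb\in\mathrm{int}\,\G_A$. Here I would argue by contradiction, or more directly: fix $k\in B$. Since $V(\xb^{(0)})\in\mathrm{int}\,\G_B$ we have $(V(\xb^{(0)}))_k=\sum_{i_1,\dots,i_m\in A}P_{i_1\dots i_m,k}x^{(0)}_{i_1}\cdots x^{(0)}_{i_m}>0$, so at least one coefficient $P_{j_1\dots j_m,k}$ with all $j_\ell\in A$ is strictly positive. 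Then for an arbitrary $\xb\in\mathrm{int}\,\G_A$ we have $x_{j_1},\dots,x_{j_m}>0$, hence $(V(\xb))_k\ge P_{j_1\dots j_m,k}x_{j_1}\cdots x_{j_m}>0$. Since this holds for every $k\in B$, we get $V(\xb)\in\mathrm{int}\,\G_B$, completing the proof.

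The only place that requires a little care — and what I would flag as the main (minor) obstacle — is the passage from "the value at one point is zero" to "all coefficients vanish" and then to "the value is zero at every point of $\G_A$": one must be sure the series defining $(V(\xb))_k$ converges and that termwise vanishing is legitimate. This is immediate from nonnegativity of every term together with the assumed boundedness $|P_{i_1\dots i_m,k}|\le N$ and $\xb\in\bal1$, so dominated/monotone convergence applies and the rearrangements are justified; no compactness of $S$ is needed, only the sign condition on the hypermatrix.
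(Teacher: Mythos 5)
Your proof is correct and follows essentially the same route as the paper: from $(V(\xb^{(0)}))_{\widetilde k}=0$ for $\widetilde k\notin B$ you deduce (by nonnegativity of the coefficients and strict positivity of $x^{(0)}_i$ for $i\in A$) that $P_{i_1\dots i_m,\widetilde k}=0$ for all $i_1,\dots,i_m\in A$, and from $(V(\xb^{(0)}))_k>0$ for $k\in B$ you extract a single positive coefficient that keeps $(V(\xb))_k>0$ on all of $\mathrm{int}\,\G_A$. Your closing remark on justifying the termwise vanishing in the infinite series is a point the paper passes over silently, but the argument is otherwise identical.
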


\begin{proof}
	"Only if" part is clear. Let us prove "if" part. First note that for
	any $ \xb \in \G_{A} $ we have
	\begin{equation}\label{S11} (V(\xb))_{k} = \sum\limits_{i_1,\dots,i_m\in\bn} P_{i_1\dots i_m,k}x_{i_{1}}\cdots x_{i_{m}} =
\sum\limits_{i_1,\dots,i_m\in{A}}P_{i_1\dots i_m,k}x_{i_{1}}\cdots x_{i_{m}}, \ \ \ k \in \bn.
	\end{equation}
	Now, we assume that for some $ \xb^{(0)} \in int\G_{A} $ one has
	$V(\xb^{(0)}) \in int\G_{B}$. The last one together with \eqref{S11} mean that for
	any $ k \in B $
	\begin{eqnarray}\label{eqn_sur_prop_iff_A1}
	(V(\xb^{(0)}))_{k} = \sum\limits_{i_1,\dots,i_m\in{A}}P_{i_1\dots i_m,k}x_{i_{1}}^{(0)}\cdots x_{i_{m}}^{(0)} >0,
	\end{eqnarray}
Then noting $\prod_{j=1}^mx_{i_{j}}^{(0)}>0$ for any $i_1,\dots,i_m\in A$, due to \eqref{eqn_sur_prop_iff_A1}
we infer that for any $k\in B$ there exist $i_1(k),i_2(k),\dots,i_m(k)\in A$ such that
$P_{i_1(k)\dots i_m(k),k}>0$. Consequently, for any $\xb\in int\G_A$ one gets
\begin{eqnarray}\label{eqn_sur_prop_iff_A5}
	(V(\xb))_{k}\geq
	P_{i_1(k)\dots i_m(k),k}x_{i_1(k)}\cdots x_{i_{m}(k)} > 0,\ \ \textmd{ for any } k\in B
	\end{eqnarray}

On the other hand, from \eqref{S11} one gets
	\begin{eqnarray}\label{eqn_sur_prop_iff_A2}
	(V(\xb^{(0)}))_{\widetilde{k}} = \sum\limits_{i_1,\dots,i_m\in{A}}P_{i_1\dots i_m,\widetilde{k}}x_{i_{1}}^{(0)}\cdots x_{i_{m}}^{(0)}=0,\ \ \
\mbox{if } \widetilde{k} \notin B.
	\end{eqnarray}	
Due to \eqref{eqn_sur_prop_iff_A2}, one infers that
$P_{i_1\dots i_m,\widetilde{k}} = 0$ for all $i_1,\dots,i_m\in A$, $\widetilde{k}\notin B$.
Keeping in mind that fact, for any  $\xb \in int\G_{A} $, we obtain
	\begin{eqnarray}\label{eqn_sur_prop_iff_A6}
	(V(\xb))_{\widetilde{k}} = \sum\limits_{i_1,\dots,i_m\in A}
	P_{i_1 \dots i_m,\widetilde{k}}x_{i_{1}}\cdots x_{i_{m}} =0,\ \ \ \textmd{ for any } \widetilde{k}\notin B.
	\end{eqnarray}
	Hence, from \eqref{eqn_sur_prop_iff_A5}, \eqref{eqn_sur_prop_iff_A6}
	it follows  that  $ V(\xb) \in int\G_{B} $. This completes the
	proof.
\end{proof}

\begin{rem}\label{rem_Cauchy_pro}
	Let us recall Cauchy Product of the following form:
$$
\left( \sum\limits_{i=1}^{\ifn} x_{i} \right)^{m} = \sum\limits_{i_1,\dots,i_m\in \bn}x_{i_{1}} \cdots x_{i_{m}},\ \ \ m\in\mathbb N,
$$
where $\sum_{i=1}^{\ifn} x_{i} < \ifn$.
\end{rem}

\begin{prop}\label{prop_1}
	Let $V$ be a $m$-ordered PSO with $ V^{-1} (\eb_{k}) $ is non-empty for some $k \in\bn $.
Let $\xb\in V^{-1} (\eb_{k}) $, then for any $ \yb \in \G_{supp({\xb})} $ one has
$$
V(\yb) = \eb_{k}
$$
\end{prop}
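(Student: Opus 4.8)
The plan is to extract from the single identity $V(\xb)=\eb_k$ enough information about the entries $P_{i_1\dots i_m,k}$ whose lower indices lie in $supp(\xb)$, and then to observe that this information already forces $V(\yb)=\eb_k$ for every $\yb$ on the face $\G_{supp(\xb)}$.

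First I would record that $\xb\in S$, so $\sum_{i\in\bn}x_i=1$, and hence by the Cauchy product formula (Remark \ref{rem_Cauchy_pro}) $\sum_{i_1,\dots,i_m\in\bn}x_{i_1}\cdots x_{i_m}=1$. Since $V(\xb)=\eb_k$ we have $(V(\xb))_k=1$, i.e.
\[
\sum_{i_1,\dots,i_m\in\bn}P_{i_1\dots i_m,k}\,x_{i_1}\cdots x_{i_m}=1=\sum_{i_1,\dots,i_m\in\bn}x_{i_1}\cdots x_{i_m}.
\]
By stochasticity of $\cp$ one has $0\le P_{i_1\dots i_m,k}\le1$ for every multi-index, so each summand on the left is dominated by the corresponding summand on the right; equality of the two sums therefore forces $P_{i_1\dots i_m,k}\,x_{i_1}\cdots x_{i_m}=x_{i_1}\cdots x_{i_m}$ for all $i_1,\dots,i_m$. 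In particular, whenever $i_1,\dots,i_m\in supp(\xb)$ the product $x_{i_1}\cdots x_{i_m}$ is strictly positive, hence $P_{i_1\dots i_m,k}=1$, and consequently $P_{i_1\dots i_m,n}=0$ for every $n\ne k$ and every $i_1,\dots,i_m\in supp(\xb)$.

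Next take an arbitrary $\yb\in\G_{supp(\xb)}$. Then $y_i\ge0$, $\sum_{i\in\bn}y_i=1$, and $y_i=0$ for $i\notin supp(\xb)$, so $supp(\yb)\subseteq supp(\xb)$. Evaluating $V$ at $\yb$ and dropping the vanishing terms (those carrying an index outside $supp(\yb)$),
\[
(V(\yb))_k=\sum_{i_1,\dots,i_m\in supp(\yb)}P_{i_1\dots i_m,k}\,y_{i_1}\cdots y_{i_m}=\sum_{i_1,\dots,i_m\in supp(\yb)}y_{i_1}\cdots y_{i_m}=\Big(\sum_{i\in\bn}y_i\Big)^{m}=1,
\]
using $P_{i_1\dots i_m,k}=1$ for indices in $supp(\xb)\supseteq supp(\yb)$ and the Cauchy product formula once more. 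Since $\yb\in S$ gives $V(\yb)\in S$ by Lemma \ref{lem1}, we have $\sum_{n\in\bn}(V(\yb))_n=1$ together with $(V(\yb))_k=1$, whence $(V(\yb))_n=0$ for all $n\ne k$, i.e. $V(\yb)=\eb_k$. There is no real obstacle in this argument; the one point that must be handled with care is the equality case of the estimate $\sum P_{i_1\dots i_m,k}x_{i_1}\cdots x_{i_m}\le\sum x_{i_1}\cdots x_{i_m}$, namely that $V(\xb)=\eb_k$ is rigid enough to pin down $P_{i_1\dots i_m,k}=1$ for every multi-index supported in $supp(\xb)$ — the rest is bookkeeping with supports and the Cauchy product.
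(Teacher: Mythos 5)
Your proof is correct and follows essentially the same route as the paper: both extract $P_{i_1\dots i_m,k}=1$ for all multi-indices in $supp(\xb)$ from the Cauchy product identity and the bound $P\le 1$, and then evaluate $V(\yb)$ directly on the face. The only cosmetic differences are that you argue via the equality case of a termwise inequality rather than by contradiction, and you treat all of $\G_{supp(\xb)}$ uniformly instead of splitting into the relative interior (handled in the paper via Proposition \ref{prop_0}) and the boundary.
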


\begin{proof}
	Denote that $ A =  supp({\xb})  $.
	Without the loss of generality, we suppose that $ \abs{ A} > 1 $ since if
$ \abs{A} = 1 $, then $ \G_{supp({\xb})} = \eb_{supp({\xb})} = {\xb} $ which is clearly $ V({\xb}) = \eb_{k} $.
	Let $ {\xb} \in V^{-1}(\eb_{k}) $, then one has
	\begin{eqnarray}\label{eqn_prop_1}
	(V({\xb}))_{n} = \left\{
	\begin{array}{lll}
	\sum\limits_{i_1,\dots,i_m\in A} P_{i_1\dots i_m,n}{x}_{i_{1}}\cdots {x}_{i_{m}} =0 & \mbox{if } & n \neq k,\\[4mm]
	\sum\limits_{i_1,\dots,i_m\in A} P_{i_1\dots i_m,n}{x}_{i_{1}}\cdots{x}_{i_{m}} =1 & \mbox{if } & n = k.
	\end{array}
	\right.
	\end{eqnarray}
	Assume that, $ P_{\tilde{i}_1\dots\tilde{i}_m,k }< 1 $ for some $\tilde{i}_1,\dots,\tilde{i}_m\in A$.
	Keeping in mind Remark \ref{rem_Cauchy_pro}, then one has
	\begin{eqnarray*}
	(V({\xb}))_{k}&=&\sum\limits_{i_1,\dots,i_m\in A} P_{i_1\dots i_m,n}{x}_{i_{1}}\cdots{x}_{i_{m}}\\
&<& \sum\limits_{i_1,\dots,i_m\in A}{x}_{i_{1}}\cdots {x}_{i_{m}}\\
& =& \left( \sum\limits_{i \in A}x_{i}\right)^{m}\\
& =& 1,
	\end{eqnarray*}
	which contradicts to \eqref{eqn_prop_1}. Therefore we conclude that
	\begin{eqnarray}\label{eqn_prop_1_1}
	P_{i_1\dots i_m,k} = 1 \textmd{ for all } i_1,\dots,i_m\in A
	\end{eqnarray}
	According to Proposition \ref{prop_0}, then one has
$$
V\left( int \G_{A} \right)= \eb_{k}.
$$
In order to complete the proof we have to show
$V(\partial\G_A)=\eb_k$. Take any arbitrary $\yb\in \partial \G_{A} $.
Then there is $ \tilde{A} \subset A $ such that $ y_{i}=0 $ for all $ i \in \tilde{A} $.
Clearly $ A \backslash \tilde{A} \neq \emptyset $ and $ \sum\limits_{i \in A \backslash \tilde{A}}y_{i} = 1 $.
Taking into account \eqref{eqn_prop_1_1}, then one finds
	\begin{eqnarray*}
	(V(\yb))_{k} &=& \sum\limits_{i_1,\dots,i_m\in {A \backslash \tilde{A}}} P_{i_1\dots i_m,k}y_{i_{1}}\cdots y_{i_{m}}\\
& =&
	\sum\limits_{i_1,\dots,i_m\in {A \backslash \tilde{A}}} y_{i_{1}}\cdots y_{i_{m}}\\
&=& 1,
	\end{eqnarray*}
	which yields that $V(\yb)=\eb_{k} $. This completes the proof.
	
\end{proof}

Now we are ready to prove main result in this section.
\begin{thm}\label{thm_sur_op}
	Let
	$V$ be a $m$-ordered PSO such that $V(\eb_{i}) = \eb_{i} $ for all $ i\in \bn $. Then the following statements are
	equivalent:
	\begin{itemize}
		\item[(i)] $V$ is orthogonal preserving;
		\item[(ii)] $V$ is surjective;
		\item[(iii)] $V$ satisfy the following conditions:
		\begin{itemize}
			\item[(1)] $V^{-1}(\eb_{i}) = \eb_{i} \textmd{ for any } i \in \bn
			$,
			\item[(2)] $V^{-1}(int \G_{\eb_{i_{1}}\eb_{i_{2}}})=int \G_{\eb_{i_{1}}\eb_{i_{2}}}  \textmd{ for any } i_{1},i_{2} \in \bn, $ \\
			\vdots
			\item[($ m $)] $V^{-1}(int \G_{\eb_{i_{1}} \cdots \eb_{i_{m}}})=int \G_{\eb_{i_{1}} \cdots \eb_{i_{m}}} \textmd{ for any } i_{1},\dots, i_{m} \in \bn, $
		\end{itemize}
	\end{itemize}
\end{thm}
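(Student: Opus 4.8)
The plan is to establish the cycle (ii)$\Rightarrow$(iii)$\Rightarrow$(i)$\Rightarrow$(ii). The first two implications are comparatively soft and rest on Theorem~\ref{thm_OP_inf} together with Propositions~\ref{prop_0}--\ref{prop_1}; the real content is (i)$\Rightarrow$(ii), the surjectivity of an orthogonal preserving operator.

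For (iii)$\Rightarrow$(i), I would fix $i_1,\dots,i_m\in\bn$ and $k\notin\{i_1,\dots,i_m\}$, set $A=\{i_1,\dots,i_m\}$, and note that condition $(m)$ of (iii) applied to these (possibly repeated) indices gives $V(\mathrm{int}\,\G_A)\subseteq\mathrm{int}\,\G_A$. For $\xb\in\mathrm{int}\,\G_A$ one then has $(V(\xb))_k=0$, whereas $(V(\xb))_k=\sum_{j_1,\dots,j_m\in A}P_{j_1\dots j_m,k}x_{j_1}\cdots x_{j_m}$ is a sum of nonnegative terms with all factors $x_{j_\ell}$ positive; hence every coefficient, in particular $P_{i_1\dots i_m,k}$, vanishes, and Theorem~\ref{thm_OP_inf} then gives that $V$ is orthogonal preserving.

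For (ii)$\Rightarrow$(iii), the plan is an induction on $\ell=|B|$, $1\le\ell\le m$. Given $\zb\in\mathrm{int}\,\G_B$, take any $\xb\in V^{-1}(\zb)$ (nonempty by surjectivity) and put $A=\mathrm{supp}(\xb)$, so $\xb\in\mathrm{int}\,\G_A$; Proposition~\ref{prop_0} gives $V(\mathrm{int}\,\G_A)\subseteq\mathrm{int}\,\G_B$, and $(V(\xb))_{\tilde k}=0$ for $\tilde k\notin B$ forces $P_{j_1\dots j_m,\tilde k}=0$ whenever $j_1,\dots,j_m\in A$ and $\tilde k\notin B$ (all factors of $\xb$ supported on $A$ being positive). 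Taking $j_1=\dots=j_m=\tilde k=i$ and recalling $P_{i\dots i,i}=1$ (from $V(\eb_i)=\eb_i$) shows $A\subseteq B$. If $\ell=1$ this forces $\xb=\eb_k$, i.e. $V^{-1}(\eb_k)=\{\eb_k\}$; if $\ell>1$ and $A\subsetneq B$, the inductive hypothesis gives $V(\mathrm{int}\,\G_A)\subseteq\mathrm{int}\,\G_A$, which is disjoint from the nonempty set $\mathrm{int}\,\G_B$ --- impossible. Hence $A=B$ and $\xb\in\mathrm{int}\,\G_B$; this yields $V^{-1}(\mathrm{int}\,\G_B)\subseteq\mathrm{int}\,\G_B$, and the reverse inclusion follows from Proposition~\ref{prop_0} (with the pair $(B,B)$), using the very $\xb\in\mathrm{int}\,\G_B$ just produced with $V(\xb)\in\mathrm{int}\,\G_B$.

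For (i)$\Rightarrow$(ii): by Theorem~\ref{thm_OP_inf} we have $(V(\xb))_k=x_k\mathcal A_k(\xb)$, so $\mathrm{supp}(V(\xb))=\mathrm{supp}(\xb)$, and for every $B\subseteq\bn$ the restriction $V|_{S^B}$ is again an $m$-ordered OP PSO on $S^B$ fixing each vertex. If $\yb\in S$ has finite support $B$, then $V|_{S^B}$ is surjective on $S^B$ by Theorem~\ref{thm_fin}, so $\yb$ lies in the range. If $\mathrm{supp}(\yb)$ is infinite, then, after replacing $V$ by $V|_{S^{\mathrm{supp}(\yb)}}$, I may assume $\yb$ has full support; I would then pick finitely supported $\yb^{(n)}\in S$ with $\mathrm{supp}(\yb^{(n)})=\{1,\dots,n\}$ and $\yb^{(n)}\to\yb$ in $\ell_1$, use the finite-support case to obtain $\xb^{(n)}\in S^{\{1,\dots,n\}}$ with $V(\xb^{(n)})=\yb^{(n)}$, and pass to the limit. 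Since ${\bf B}_1^+$ is compact in the product topology, a subsequence of $(\xb^{(n)})$ converges pointwise to some $\xb^*\in{\bf B}_1^+$, and the aim is $V(\xb^*)=\yb$; this automatically forces $\xb^*\in S$ because $\bigl(\sum_k x^*_k\bigr)^m=\sum_k(V(\xb^*))_k=1$. This limit step is the main obstacle: $V$ is not continuous for the product topology, so one must rule out loss of mass to infinity. For that I would use the two-sided estimate
\begin{equation*}
m^{-1}y^{(n)}_k\ \le\ x^{(n)}_k\ \le\ \bigl(y^{(n)}_k\bigr)^{1/m},\qquad k\in\bn,
\end{equation*}
whose upper bound is $x_k^m\le(V(\xb))_k$ and whose lower bound comes from $\mathcal A_k(\xb)\le m$ on $S$ (by \eqref{A_k}, each inner sum there is at most $(1-x_k)^{j}$, and $x_k^{m-1}+\sum_{j=1}^{m-1}\binom{m}{j}x_k^{m-1-j}(1-x_k)^{j}=x_k^{-1}\bigl(1-(1-x_k)^{m}\bigr)\le m$ by Bernoulli's inequality); combined with the uniform summability of $(\yb^{(n)})$ these should force uniform smallness of the tails $\sum_{k>N}x^{(n)}_k$, i.e. tightness of $(\xb^{(n)})$, so that the pointwise convergence upgrades to $\ell_1$-convergence and, $V$ being $\ell_1$-Lipschitz (with constant $m$) on ${\bf B}_1^+$, $V(\xb^*)=\lim_n V(\xb^{(n)})=\yb$. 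Establishing this tightness is, I expect, the delicate heart of the proof. Surjectivity on $S$ and on ${\bf B}_1^+$ are equivalent by Lemma~\ref{lem2}.
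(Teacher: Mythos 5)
Your (iii)$\Rightarrow$(i) and (ii)$\Rightarrow$(iii) arguments are sound and essentially match the paper's: the paper also gets (iii)$\Rightarrow$(i) by reading off $P_{i_1\dots i_m,k}=0$ for $k\notin A$ from $V(int\,\G_A)=int\,\G_A$ and invoking Theorem \ref{thm_OP_inf}, and its (ii)$\Rightarrow$(iii) likewise rests on Proposition \ref{prop_0} and a support argument (the paper treats the vertex case via Proposition \ref{prop_1}, where you run a clean induction on $|B|$; both are fine).

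The genuine gap is in (i)$\Rightarrow$(ii), which you rightly call the heart of the matter. Your two-sided estimate $m^{-1}y_k\le x_k\le y_k^{1/m}$ is correct (both $\mathcal A_k(\xb)\ge x_k^{m-1}$ and, via Bernoulli, $\mathcal A_k(\xb)\le m$ check out), but it does not give tightness of the preimages $\xb^{(n)}$: summing the lower bound over $k\le N$ yields only $\sum_{k>N}x^{(n)}_k\le 1-m^{-1}\sum_{k\le N}y^{(n)}_k\le 1-m^{-1}+\eps$, which for $m\ge2$ is bounded below by roughly $1/2$ rather than small, and the upper bound $x_k\le y_k^{1/m}$ carries no summable-in-$k$ information. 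So the decisive step --- excluding escape of mass to infinity, which is exactly where $V$ fails to be continuous for the product topology --- remains open, as you yourself flag. What your argument does establish is only that every finitely supported $\yb$ lies in the range (via Theorem \ref{thm_fin} and the invariance of $S^B$ for finite $B$), not surjectivity on all of $S$. For comparison, the paper's own (i)$\Rightarrow$(ii) takes an entirely different route: a transfinite ("transfer") induction over a family of subsets of $\bn$ ordered by inclusion, combined with a topological argument that a segment in $int\,S^a$ joining a point of $V(S^a)$ to a point of its complement must meet $\partial V(S^a)$, whose preimage would have to lie in $\partial S^a$, contradicting $V(\partial S^a)=\partial S^a$. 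So even if your approximation scheme could be repaired, it would be a genuinely different proof; in its present form it is incomplete at the one step that carries the weight of the theorem.
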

\begin{proof}
	(i)$\Rightarrow$(ii). Denote
	\begin{eqnarray*}
		&& A_1=\{ [1,n]\subset \bn : n\in \bn \}, \\
		&&A_2=\{ a\subset [1,n] : |[1,n]\setminus a|\geq 2,  n\in \bn \}, \\
		&&A_3=\{ b\subset \bn : a\subset b, \ a\in A_1\cup A_2,
		|\bn\setminus b|<\infty,  \},\\
		&&A=A_1\cup A_2\cup A_3.
	\end{eqnarray*}
	Let us introduce an order in $A$ by inclusion, i.e.  $a\leq b$ means
	that $a\subset b$ for $a,b\in A$. It is clear that $A$ is a
	completely ordering set. To prove that $V$ is surjective we will use
	transfer induction method with respect to the set $A$. Obviously, for the first element $\{1\}$ of the set  $A$, the operator $V$
	on $S^{\{1\}}$ is surjective (see \cite{ME18}).
	Taking into account Corollary \ref{cor_OP_S_inf}, then we have
$$
V(S^{\{1\}}) = S^{\{1\}}
$$
	
	Assume that for an
	element $a\in A$ the operator $V$ is surjective on $S^b$ for every
	$b<a$. Let us show that it is surjective on $S^a$. Suppose that
	$V(S^a)\neq S^a$. For the boundary  $\partial S^a$ of $S^a$ we have
	$\partial S^a=\bigcup\limits_{c\in A: c<a}S^c$. According to the
	assumption of induction one gets
	\begin{equation}\label{5eq3}
	V(\partial S^{a})=\partial S^{a}.
	\end{equation}
	On the other hand,  there exist $\xb,\yb\in int S^a$ such that $\xb\in
	V(S^a), \ \yb\notin V(S^a)$. The segment $[\xb,\yb]$ contains at least
	one boundary point $\zb$ of the set $V(S^a)$. Since $V:S^a\to
	V(S^a)$ is continuous, then the boundary point goes to
	boundary one. Therefore, for $\zb\in ri S^a$ one has $V^{-1}(\zb)\in
	\partial S^a$, which contradicts to \eqref{5eq3}.
	Thus, $V$ is surjective.\\
	(ii)$\Rightarrow$(iii). Assume that $V$ is surjective and
	let $V^{-1}(\eb_i) $ be the preimage of $ \eb_{i} $.
		Suppose that $ |supp(V^{-1}(\eb_{i}))| >1  $, then there is $ \yb \in V^{-1}(\eb_{i}) $ such that $ \abs{supp(\yb)} > 1 $. Hence according to Proposition \ref{prop_1} for any $ \xb \in \G_{supp(\yb)} $ one has
$$
V(\xb) = \eb_{i}.
$$
	Due to $ \abs{supp(\yb)} > 1 $, then there is $ i_{0} \neq i $ such that $ \eb_{i_{0}} \in \G_{supp(\yb)} $ i.e., $V(\eb_{i_{0}}) = \eb_{i} $ which leads to a contradiction to early assumption in the theorem.
	Thus $ \abs{supp(V^{-1}(\eb_{i}))} =1 $. Using the same argument as before there is unique $ i $ such that $V(\eb_{i})  $ maps to $ \eb_{i} $.
	Hence we obtain (iii)(1).
	
	Further, let $ j \in \{ 2,\dots,m \} $. Take $ \yb \in
	int\G_{\eb_{i_{1}}\cdots \eb_{i_{j}}} $ and let $ \xb \in
	V^{-1}(\yb) $. Using Proposition \ref{prop_0} we have
$$
V(int\G_{supp(\xb)} ) \subset int\G_{\eb_{i_{1}} \cdots \eb_{i_{j}}}.
$$
	In fact, we have
	$$
supp(\xb) = \{ i_{1},\dots, i_{j} \} \ \textmd{ for any } \xb \in V^{-1}(\yb).
$$
	If not, then there exists $ j' \in supp(\xb) \backslash \{
	i_{1},\dots, i_{j} \} \neq \emptyset $. Then $V(\eb_{j'}) \in
	V(int\G_{supp(\xb)}) \subset \G_{\eb_{i_{1}}\cdots \eb_{i_{j}}} $,
	which is a contradiction. Therefore we obtain (iii)($j$) for any $ j
	\in \{ 2,\dots,m \} $.\\
(iii)$\Rightarrow$(i). Take arbitrary $ A \subset \bn $ such that $ \abs{A} \leq m $. According to (iii), then one has
	$$
V(int \G_{A}) = int \G_{A}
$$
	Therefore for any $ k \notin A $ and $ \xb \in int \G_{A} $ one has
$$
(V(\xb))_{k} = \sum\limits_{i_1,\dots,i_m\in A} P_{i_1\dots i_m,k}x_{i_{1}} \cdots x_{i_{m}} = 0.
$$
	Taking into account $ \xb \in int \G_{A} $ one infers that
$$
P_{i_1\dots i_m,k} = 0,\ \ \  \forall i_1,\dots,i_m\in A,\ \forall k\notin A.
$$
	Due to arbitrariness of $ A \subset \bn $ yields
$$
P_{i_1\dots i_m,k} = 0,\ \ \ \forall i_1,\dots,i_m\in\bn,\ \forall k\notin\{i_1,\dots,i_m\}.
$$
	Then, according to Theorem \ref{thm_OP_inf}, $V$ is an OP.
	This completes the proof of theorem.
\end{proof}

Immediately, from the last theorem one concludes the following
corollary.

\begin{cor}
		Let
		$V$ be a $m$-ordered PSO such that $V(\eb_{i}) = \eb_{\pi(i)} $ for all $ i\in \bn $. Then the following statements are
		equivalent:
		\begin{itemize}
			\item[(i)] $V$ is orthogonal preserving;
			\item[(ii)] $V$ is surjective;
			\item[(iii)] $V$ satisfy the following conditions:
			\begin{itemize}
				\item[(1)] $V^{-1}(\eb_{i}) = \eb_{\pi(i)} \textmd{ for any } i \in \bn
				$,
				\item[(2)] $ V^{-1}(int \G_{\eb_{i_{1}}\eb_{i_{2}}})=int \G_{\eb_{\pi(i_{1})}\eb_{\pi(i_{2})}}  \textmd{ for any } i_{1},i_{2} \in \bn, $ \\
				\vdots
				\item[($m$)] $ V^{-1}(int \G_{\eb_{i_{1}} \cdots \eb_{i_{m}}})=int \G_{\eb_{\pi(i_{1})} \cdots \eb_{\pi(i_{m})}} \textmd{ for any } i_{1},\dots, i_{m} \in \bn, $
			\end{itemize}
			for some permutation $\pi$ of $\bn$.
		\end{itemize}
	
\end{cor}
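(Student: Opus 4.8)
The plan is to reduce the Corollary to Theorem~\ref{thm_sur_op} by a change of coordinates that turns $V$ into an operator fixing every vertex. Given the $m$-ordered PSO $V$ with $V(\eb_i)=\eb_{\pi(i)}$, I define a new operator $W$ on ${\bf B}_1^+$ by $(W(\xb))_k=(V(\xb))_{\pi(k)}$ for $k\in\bn$. Since $(W(\xb))_k=\sum_{i_1,\dots,i_m\in\bn}P_{i_1\dots i_m,\pi(k)}\,x_{i_1}\cdots x_{i_m}$, the operator $W$ is again an $m$-ordered PSO, with hypermatrix $Q_{i_1\dots i_m,k}=P_{i_1\dots i_m,\pi(k)}$: non-negativity, the normalisation $\sum_k Q_{i_1\dots i_m,k}=\sum_n P_{i_1\dots i_m,n}=1$, and the symmetry in $i_1,\dots,i_m$ are all inherited from $\mathcal P$. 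Moreover $(W(\eb_i))_k=(\eb_{\pi(i)})_{\pi(k)}=\delta_{ik}$, so $W(\eb_i)=\eb_i$ for every $i\in\bn$, and Theorem~\ref{thm_sur_op} applies to $W$.

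Next I would check that the three properties pass between $V$ and $W$. Surjectivity: the coordinate permutation $\xb\mapsto(x_{\pi^{-1}(k)})_{k\in\bn}$ is a bijection of ${\bf B}_1^+$ onto itself and $W$ is its composition with $V$, hence $W$ is onto iff $V$ is. Orthogonal preservation: relabelling coordinates does not affect disjointness of supports, and precisely $supp(W(\xb))=\pi^{-1}(supp(V(\xb)))$, so $W(\xb)\perp W(\yb)$ exactly when $V(\xb)\perp V(\yb)$, while $\xb\perp\yb$ is unchanged; thus $V$ is OP iff $W$ is OP.

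It remains to translate condition (iii). From the definition of $W$ one gets, for any $A\subset\bn$, that $W(\xb)\in int\,\G_A$ iff $V(\xb)\in int\,\G_{\pi(A)}$, whence $W^{-1}(int\,\G_A)=V^{-1}(int\,\G_{\pi(A)})$ (and in particular $W^{-1}(\eb_i)=V^{-1}(\eb_{\pi(i)})$). Therefore the identities $W^{-1}(int\,\G_{\{i_1,\dots,i_j\}})=int\,\G_{\{i_1,\dots,i_j\}}$, $1\le j\le m$, of Theorem~\ref{thm_sur_op}(iii) become, after the substitution $i_\ell\mapsto\pi(i_\ell)$, exactly the identities listed in part~(iii) of the Corollary (with the permutation there being $\pi^{-1}$, which is in any case ``some permutation of $\bn$''). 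Combining this equivalence with those of the previous paragraph and with Theorem~\ref{thm_sur_op} applied to $W$ yields (i)$\Leftrightarrow$(ii)$\Leftrightarrow$(iii) for $V$.

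I expect the only delicate point to be bookkeeping: one must fix once and for all whether a permutation acts on the indices or on the coordinate values, since that choice dictates whether $\pi$ or $\pi^{-1}$ appears when computing preimages of the faces $\G_A$. With the convention $(W(\xb))_k=(V(\xb))_{\pi(k)}$ kept throughout, the argument is essentially a routine ``change of variables'' layered on top of Theorem~\ref{thm_sur_op}, and nothing harder than careful index-chasing is required.
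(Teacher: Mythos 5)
Your reduction is correct and is exactly the argument the paper has in mind: the authors state the corollary follows ``immediately'' from Theorem~\ref{thm_sur_op}, and your conjugation $(W(\xb))_k=(V(\xb))_{\pi(k)}$, which fixes every vertex and transports surjectivity, orthogonality preservation, and the face conditions back and forth, is the natural way to make that precise. Your observation that the permutation appearing in condition~(iii) is $\pi^{-1}$ of the one in the hypothesis (harmless, since (iii) only asserts ``for some permutation'') is the right resolution of the paper's notational clash.
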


\section{Surjectivity of PSO}

The natural question may arise as follows: Is there any surjective PSO without the condition $V(\eb_{i}) = \eb_{\pi(i)} $? This question is answered by the following example.

\begin{exm} \label{exm_not_op_sur} Let $\overline{V}$
 be a surjective $m$-ordered PSO with $\overline{V}(\eb_i)=\eb_i$, $\forall i\in\bn$. We define new $m$-ordered PSO as follows:
	\begin{equation}\label{V121}
	(V(\xb))_{k} = \left\{
	\begin{array}{lll}
		(\overline{V}(\xb))_1+(\overline{V}(\xb))_2, & \mbox{if } k=1,\\[4mm]
		(\overline{V}(\xb))_{k+1},  & \mbox{if }  k>1.
	\end{array}
	\right.
	\end{equation}
	One can see that $ V(\eb_{1}) = V(\eb_{2}) = \eb_{1} $, which implies that $V$ is not OP. We show
that $V$ is surjective.
	
Take any $\yb=(y_1,y_2,\dots)\in S$. Then, $\yb'=(0,y_1,y_2,\dots)$ also belongs to $S$. Since surjectivity of $\overline{V}$
one can find $\xb=(x_1,x_2,\dots)\in S$ such that $\overline{V}(\xb)=\yb'$. We notice that, due to Theorem \ref{thm_sur_op},
$\overline{V}$ is OP. Hence, thanks to Theorem \ref{thm_OP_inf} (iii), one can see that $x_1=0$. Then, from \eqref{V121}
we obtain
$$
(V(\xb))_k = (\overline{V}(\xb))_{k+1},\ \ \ k\in\bn,
$$
which implies $V(\xb)=\yb$. Finally, arbitrariness of $\yb\in S$ yields surjectivity of $V$.
\end{exm}

The considered Example \ref{exm_not_op_sur} naturally leads our attention to the description of the set
of all surjective PSOs on an infinite dimensional simplex.
In this section, we want to describe necessity and sufficiently condition to PSO to be surjective

\begin{thm}\label{prop_2}
	Let $V$ be a surjective $m$-ordered PSO on $S$. Then there exists a
	sequence
	$\{j_k\}_{k\geq1}\subset\mathbb N$
	such that $P_{j_kj_k\dots j_k,k}=1$ for all $k\in\bn$.
\end{thm}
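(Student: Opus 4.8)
The plan is to extract, for each coordinate $k$, a suitable "source" basis vector whose image under $V$ is $\eb_k$, and then use the structure of $V^{-1}(\eb_k)$ provided by Proposition \ref{prop_1}. Fix $k\in\bn$. Since $V$ is surjective, $V^{-1}(\eb_k)\neq\emptyset$; pick $\xb\in V^{-1}(\eb_k)$ and set $A=supp(\xb)$. By Proposition \ref{prop_1}, $V(\yb)=\eb_k$ for every $\yb\in\G_{A}$; in particular $V(\eb_j)=\eb_k$ for every $j\in A$. Now I would argue that $A$ must be a singleton: if $|A|\geq 2$, pick two distinct indices $j,j'\in A$; then $\eb_j\perp\eb_{j'}$ but $V(\eb_j)=V(\eb_{j'})=\eb_k$, so $V(\eb_j)\circ V(\eb_{j'})=\eb_k\circ\eb_k=1\neq 0$. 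This does not immediately contradict anything since $V$ is not assumed to be OP here — so instead I would use surjectivity directly. Here is the cleaner route: from \eqref{eqn_prop_1_1} in the proof of Proposition \ref{prop_1}, we already know $P_{i_1\dots i_m,k}=1$ for all $i_1,\dots,i_m\in A$. If $|A|\geq 2$, fix $j,j'\in A$ distinct; then $P_{jj\dots j,k}=1$ and $P_{j'j'\dots j',k}=1$, which forces $\sum_n P_{jj\dots j,n}\geq P_{jj\dots j,k}=1$ with equality, hence $P_{jj\dots j,n}=0$ for $n\neq k$, i.e. $V(\eb_j)=\eb_k$, and likewise $V(\eb_{j'})=\eb_k$. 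So both $\eb_j$ and $\eb_{j'}$ lie in $V^{-1}(\eb_k)$, and more relevantly, nothing yet is contradictory — but now consider any other coordinate $\ell\neq k$. By surjectivity $V^{-1}(\eb_\ell)\neq\emptyset$ and the same reasoning gives a set $A_\ell$ with $P_{i_1\dots i_m,\ell}=1$ for all indices in $A_\ell$. Since for $j\in A_k$ we have $P_{jj\dots j,k}=1$, stochasticity forces $P_{jj\dots j,\ell}=0$, so $j\notin A_\ell$: the sets $A_k$ are pairwise disjoint across $k$. Because $\bn$ is countable, each nonempty $A_k$ must therefore be finite, but there is no reason it is a singleton from this alone.

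To finish, I would choose, for each $k$, a single index $j_k\in A_k$; then by the argument above $P_{j_kj_k\dots j_k,k}=1$, which is exactly the claim, and the $j_k$ are automatically distinct by the disjointness of the $A_k$. Actually the distinctness is not part of the statement, so it is not needed — but it comes for free and may be worth remarking. So the core logical skeleton is: (1) surjectivity $\Rightarrow V^{-1}(\eb_k)\neq\emptyset$; (2) Proposition \ref{prop_1} and its internal identity \eqref{eqn_prop_1_1} $\Rightarrow$ for any $\xb\in V^{-1}(\eb_k)$ and any $j\in supp(\xb)$, $P_{jj\dots j,k}=1$; (3) pick any such $j$ and call it $j_k$.

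The main obstacle I anticipate is making step (2) airtight: Proposition \ref{prop_1} is stated for a single given $\xb\in V^{-1}(\eb_k)$ and concludes $V(\yb)=\eb_k$ on all of $\G_{supp(\xb)}$, and its proof establishes $P_{i_1\dots i_m,k}=1$ for all $i_1,\dots,i_m\in supp(\xb)$; I need to be sure I am entitled to invoke that internal equation (or re-derive it quickly via the Cauchy product identity in Remark \ref{rem_Cauchy_pro}, exactly as in that proof). The one subtlety is that $supp(\xb)$ could a priori be infinite; but the Cauchy-product computation $\big(\sum_{i\in A}x_i\big)^m=\sum_{i_1,\dots,i_m\in A}x_{i_1}\cdots x_{i_m}$ is valid for $\sum_{i\in A}x_i=1<\infty$ regardless of $|A|$, so the inequality $(V(\xb))_k<1$ under the assumption that some $P_{\tilde i_1\dots\tilde i_m,k}<1$ still goes through, forcing $P_{i_1\dots i_m,k}=1$ for all tuples from $A$, and in particular $P_{jj\dots j,k}=1$ for each $j\in A$. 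Setting $j_k:=\min A$ (or any element of $A$) then yields the sequence $\{j_k\}_{k\geq1}$ with $P_{j_kj_k\dots j_k,k}=1$ for all $k$, completing the proof.
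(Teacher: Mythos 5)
Your proposal is correct and follows essentially the same route as the paper: use surjectivity to pick $\xb\in V^{-1}(\eb_k)$, run the Cauchy-product argument of Remark \ref{rem_Cauchy_pro} (i.e.\ the identity \eqref{eqn_prop_1_1} from the proof of Proposition \ref{prop_1}) to conclude $P_{jj\dots j,k}=1$ for every $j\in supp(\xb)$, and select any such $j$ as $j_k$. The paper merely packages this as the statement that the set $I_k=\{j: P_{j\cdots j,k}=1\}$ is nonempty and sets $j_k=\inf I_k$, with an explicit (and logically redundant) case split on $|supp(\xb)|$; your side remarks on orthogonality and disjointness of the supports are, as you note yourself, not needed.
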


\begin{proof}
	Denote,
$$
I_{k} = \left\{ j \in \bn:\ P_{j\cdots j, k} = 1 \right\}
$$
	First, we will show that for any surjective $m$-ordered PSO $V$, the set $ I_{k} $ is non-empty for any $ k \in \bn $.
	Due to surjectivity of $V$, for any $ \eb_{k} $, $ k\in \bn $ there is $ \xb^{(k)} \in S $ such that
	\begin{eqnarray}\label{eqn_prop_2}
	V(\xb^{(k)}) = \eb_{k}
	\end{eqnarray}
	Now, we divide into two cases.
	
	\textbf{Case 1:} Let $ \abs{supp(\xb^{(k)})} = 1 $ and we denote $ supp(\xb^{(k)}) = j_{k} $, therefore
$$
(V(\xb^{(k)}))_{k} = P_{j_{k}\cdots j_{k},k} x_{j_{k}}^k =1.
$$
	Clearly $ P_{j_{k}\cdots j_{k},k} =1 $, hence $ j_{k} \in I_{k} $
	
	\textbf{Case 2:} Let $|supp(\xb^{(k)})|>1$ and let $ A = supp(\xb^{(k)}) $. From \eqref{eqn_prop_2} we get
	\begin{eqnarray}
	(V(\xb^{(k)}))_{k} = \sum\limits_{i_1,\dots,i_m\in A}P_{i_1\dots i_m,k}x_{i_{1}}\cdots x_{i_{m}} =1.
	\end{eqnarray}
	Now suppose that there is $ \bar{I} = \{ \bar{i}_{{1}}, \dots, \bar{i}_{{m}} \} \in A^m$ such that $ P_{\bar{i}_1\dots\bar{i}_m,k} < 1 $.
	Keeping in mind Remark \ref{rem_Cauchy_pro}, one has the following
	\begin{eqnarray}
	(V(\xb^{(k)}))_{k} &=& \sum\limits_{i_1,\dots,i_m\in A}P_{i_1\dots i_m,k}x_{i_{1}}\cdots x_{i_{m}} \nonumber \\
	&\leq& \sum\limits_{\{i_1,\dots i_m\}\in A\setminus\bar{I}}x_{i_{1}}\cdots x_{i_{m}} + P_{\bar{i}_1\dots\bar{i}_m,k}x_{\bar{i}_{1}}\cdots x_{\bar{i}_{m}} \nonumber \\
	&<& \sum\limits_{i_1,\dots,i_m\in A}x_{i_{1}}\cdots x_{i_{m}}
	= \left(  \sum\limits_{i \in A}x_{i} \right)^{m} = 1 \nonumber
	\end{eqnarray}
	The last statement contradict to \eqref{eqn_prop_2}, therefore we conclude that
$$
P_{i_1\dots i_m,k} = 1 \textmd{ for all } i_1,\dots,i_m\in{A}.
$$
	Thus, $ P_{i\cdots i,k} =1 $ for any $ i \in A $. Consequently, $A \subset I_{k}$. This yields that $I_k\neq\emptyset$
for any $k\geq1$.
	
	So, knowing non-emptiness of $I_k$ for all $k\in\bn$, we define a sequence $\{j_k\}_{k\geq1}$ as follows $j_k=\inf I_k$. Hence, due to
construction of $I_k$ we get
$P_{j_kj_k\dots j_k,k}=1$ for all $k\in\bn$. This completes the proof.
\end{proof}

\begin{thm}\label{thm_surject}
	Let $V$ be a $m$-ordered PSO on $S$. Assume that there exists a
sequence
$\{j_n\}_{n\geq1}\subset\mathbb N$ such that for any $\xb\in S$ with $supp(\xb)\subset\{j_n\}_{n\geq1},\ \ n\in\mathbb N$ one has
\begin{equation}\label{nec1}
(V(\xb))_{k} = x_{j_k}\sum\limits_{j=0}^{m-1}\left(\begin{array}{l}
m\\
j
\end{array}
\right)
x_{j_k}^{m-1-j}\sum\limits_{i_{m+1-j},\dots,i_m\neq j_k}
 P_{j_k\dots j_ki_{m+1-j}\dots i_m,k}x_{i_{m+1-j}}\cdots x_{i_{m}},
\end{equation}
\begin{equation}\label{nec2}
P_{j_k\dots j_k,k}=1,\ \ \ \ \forall k\in\bn.
\end{equation}
Then $V$ is surjective.
\end{thm}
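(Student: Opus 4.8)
The plan is to recognise that the two conditions \eqref{nec1}--\eqref{nec2} say precisely that, after relabelling coordinates through the sequence $\{j_n\}$, the restriction of $V$ to the sub-simplex spanned by $\{\eb_{j_n}\}$ is an orthogonal preserving PSO fixing every vertex, and then to quote Theorem \ref{thm_sur_op}. First I would observe that the $j_n$ are pairwise distinct: were $j_k=j_\ell$ with $k\ne\ell$, then \eqref{nec2} would give $P_{j_k\dots j_k,k}=P_{j_k\dots j_k,\ell}=1$, so $\sum_n P_{j_k\dots j_k,n}\ge 2$, contradicting stochasticity of $\mathcal P$. Hence $n\mapsto j_n$ is a bijection of $\bn$ onto $J:=\{j_n:n\in\bn\}$, and the assignment $x_{j_n}=z_n$ ($n\in\bn$), $x_i=0$ for $i\notin J$, is a bijection from $S$ onto $\{\xb\in S:\ supp(\xb)\subseteq J\}$.

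Next I would build the auxiliary PSO. Define $\overline{\mathcal P}=(\overline P_{s_1\dots s_m,k})$ by $\overline P_{s_1\dots s_m,k}:=P_{j_{s_1}\dots j_{s_m},k}$; it is nonnegative, symmetric in $s_1,\dots,s_m$, and $\sum_k\overline P_{s_1\dots s_m,k}=\sum_k P_{j_{s_1}\dots j_{s_m},k}=1$, so it is stochastic, and I let $\overline V$ be the associated $m$-ordered PSO on $S$. For $\xb,\zb$ related as above, a direct rearrangement of the defining sum gives $(V(\xb))_k=\sum_{i_1,\dots,i_m\in J}P_{i_1\dots i_m,k}x_{i_1}\dots x_{i_m}=\sum_{s_1,\dots,s_m\in\bn}\overline P_{s_1\dots s_m,k}z_{s_1}\dots z_{s_m}=(\overline V(\zb))_k$. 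Feeding this into \eqref{nec1} (whose indices $i_\ell\notin J$ contribute nothing since $\xb$ is supported on $J$, whose condition $i_\ell\ne j_k$ becomes $s_\ell\ne k$ by distinctness of the $j_n$, and whose $j=0$ term equals $z_k^{m-1}$ by \eqref{nec2}) yields, for every $\zb\in S$,
$$
(\overline V(\zb))_k=z_k\Big(z_k^{m-1}+\sum_{j=1}^{m-1}\binom{m}{j}z_k^{m-1-j}\!\!\sum_{s_{m+1-j},\dots,s_m\ne k}\!\!\overline P_{k\dots k s_{m+1-j}\dots s_m,k}\,z_{s_{m+1-j}}\dots z_{s_m}\Big),
$$
which is exactly condition (iii) of Theorem \ref{thm_OP_inf} for $\overline V$; note also $\overline V(\eb_i)=\eb_i$, since $P_{j_i\dots j_i,k}=\d_{ik}$ by \eqref{nec2} and stochasticity. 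Thus by Theorem \ref{thm_OP_inf} the operator $\overline V$ is orthogonal preserving, and then by Theorem \ref{thm_sur_op} it is surjective on $S$.

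Finally I would transport surjectivity back to $V$: given $\yb\in S$, pick $\zb\in S$ with $\overline V(\zb)=\yb$ and let $\xb$ be the corresponding $J$-supported vector; then $\xb\in S$ since $\sum_i x_i=\sum_n z_n=1$, and by the identity displayed above $(V(\xb))_k=(\overline V(\zb))_k=y_k$ for all $k$, i.e. $V(\xb)=\yb$. Hence $V$ is surjective on $S$ (and, by Lemma \ref{lem2}, on ${\bf B}_1^+$). The step that needs care is the passage through the relabelling $x_{j_n}\leftrightarrow z_n$: one must check that the monomials in \eqref{nec1} carrying an index outside $J$ genuinely drop out, that the binomial weights and the $j=0$ term line up with \eqref{A_k}, and that the substitution $i_\ell=j_{s_\ell}$ is unambiguous — which is exactly why distinctness of the $j_n$ is recorded first. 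Everything else is a direct appeal to Theorems \ref{thm_OP_inf} and \ref{thm_sur_op}.
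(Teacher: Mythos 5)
Your proof is correct and follows essentially the same route as the paper: both relabel the coordinates through the sequence $\{j_n\}$ to produce an auxiliary PSO that fixes every vertex, verify via conditions \eqref{nec1}--\eqref{nec2} that it satisfies Theorem \ref{thm_OP_inf}(iii) and hence is orthogonal preserving, invoke Theorem \ref{thm_sur_op} to get its surjectivity, and then transport surjectivity back to $V$ through the relabelling bijection. Your explicit check that the $j_n$ are pairwise distinct is a detail the paper leaves implicit, but it does not change the argument.
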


\begin{proof}
Let us assume that there exists a set of indexes $I=\{j_n\}_{n\geq1}$ which satisfies
	\eqref{nec1},\eqref{nec2}. We define new hypermatrix $\tilde{\mathcal P}=(\tilde{P}_{i_1\dots i_m,k})_{i_1,\dots,i_m,k\geq1}$
as follows
$$
\tilde{P}_{i_1\dots i_m,k}=\left\{
\begin{array}{ll}
P_{\alpha(i_1)\dots\alpha(i_m),k}, & \alpha(i_1),\dots,\alpha(i_m)\in\{j_n\}_{n\geq1},\\
0, & \mbox{otherwise},
\end{array}
\right.
$$
where $\alpha(k)=j_k$ for all $k\geq1$. One can check that $\tilde{\mathcal P}$ satisfies the following
conditions:
$$
\tilde{P}_{i_1\dots i_m,k}=0,\ \ \ \forall k\notin\{i_1,\dots,i_m\}.
$$
Then, for the $m$-ordered PSO $\tilde{V}$ is given by
$$
(\tilde{V}(\xb))_k=\sum_{i_1,\dots,i_m\in\bn}\tilde{P}_{i_1\dots i_m,k}x_{i_1}\cdots x_{i_m},\ \ \ \forall\xb\in S,
$$
holds $\tilde{V}(\eb_k)=\eb_k$ for all $k\geq1$. Thanks to Theorem \ref{thm_OP_inf}, PSO $\tilde{V}$ is OP.
Hence, due to Theorem \ref{thm_sur_op}, PSO $\tilde{V}$ is surjective.

On the other hand, if we denote $S^I=\{\xb\in S: supp(x)\subset I\}$ then
operator $T:S\to S^I$ given by $T(\xb)_k=x_{\alpha(k)}$ is bijection.
Furthermore, we have
$\tilde{V}=V\circ T$. Then, keeping in mind that $\tilde{V}$ is surjective and $T$ is bijection,
we infer that $V=\tilde{V}\circ T^{-1}$ is surjective.
	The proof is completed.
\end{proof}

\begin{thm}\label{last_thm}
Let $V$ be surjective $m$-ordered PSO. If $V$ is OP then there exists permutation $\pi$ of natural numbers
such that for any $i_1,\dots,i_m,k\in\bn$
\begin{equation}\label{sdgf}
P_{i_1\dots i_m,k}=0,\ \ \mbox{if}\ \pi(k)\notin\{i_1,\dots,i_m\}.
\end{equation}
\end{thm}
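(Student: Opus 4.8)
The plan is to reduce the claim to the case already handled by Theorem~\ref{thm_sur_op} by ``relabelling coordinates''. The key point is that Theorem~\ref{prop_2} already produces a sequence $\{j_k\}_{k\geq1}$ with $P_{j_k\dots j_k,k}=1$; the work is to show that $k\mapsto j_k$ is in fact a permutation of $\bn$ and that, after composing $V$ with this permutation, one lands on a PSO satisfying the hypothesis $\tilde V(\eb_i)=\eb_i$ of Theorem~\ref{thm_sur_op}. Throughout I would use that $V$ is OP, so by Theorem~\ref{thm_OP_inf}(ii) we have $P_{i_1\dots i_m,k}=0$ whenever $k\notin\{i_1,\dots,i_m\}$.

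First I would show the map $k\mapsto j_k$ is injective. Since $P_{j_k\dots j_k,k}=1$ and the hypermatrix is stochastic, $P_{j_k\dots j_k,n}=0$ for all $n\neq k$; in particular $V(\eb_{j_k})=\eb_k$. If $j_k=j_\ell$ with $k\neq\ell$ this forces $\eb_k=V(\eb_{j_k})=V(\eb_{j_\ell})=\eb_\ell$, a contradiction. Next I would show surjectivity of $k\mapsto j_k$, i.e. that every index $m_0\in\bn$ equals some $j_k$. By surjectivity of $V$ pick $\xb^{(m_0)}$ with $V(\xb^{(m_0)})=\eb_{m_0}$; exactly as in the proof of Theorem~\ref{prop_2} (Case~1 and Case~2 via the Cauchy-product estimate of Remark~\ref{rem_Cauchy_pro}), $A:=supp(\xb^{(m_0)})$ satisfies $P_{i\dots i,m_0}=1$ for every $i\in A$, so $A\subset I_{m_0}$ and in particular $j_{m_0}=\inf I_{m_0}$ is well defined with $P_{j_{m_0}\dots j_{m_0},m_0}=1$. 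It remains to rule out that two distinct $m_0$ share the whole of $\bn$ as a gap; equivalently I must check $I_k\cap I_\ell=\emptyset$ for $k\neq\ell$ (immediate from $V(\eb_j)=\eb_k$ when $j\in I_k$) and that $\bigcup_k I_k=\bn$. For the latter: given $j\in\bn$, by OP-ness and $V(\eb_j)=\sum_n P_{j\dots j,n}\eb_n$ with $\sum_n P_{j\dots j,n}=1$ and $P_{j\dots j,n}=0$ for $n\neq j$ (since $n\notin\{j,\dots,j\}$), we get $P_{j\dots j,j}=1$, so $j\in I_j$. Hence every $j$ lies in some $I_k$ (namely $k=j$), the sets $I_k$ partition a subset of $\bn$, and since $j\mapsto$ (the unique $k$ with $j\in I_k$) is the inverse of $k\mapsto j_k$ on its range, $k\mapsto j_k$ is a bijection of $\bn$. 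Set $\pi=(k\mapsto j_k)^{-1}$, equivalently $\pi(j_k)=k$, i.e. $j_{\pi(n)}$-type bookkeeping; I will phrase it so that $\pi$ is the permutation with $j_k=\pi^{-1}(k)$.

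Finally I would deduce \eqref{sdgf}. Fix $i_1,\dots,i_m,k$ with $\pi(k)\notin\{i_1,\dots,i_m\}$. Since $V$ is OP, Theorem~\ref{thm_OP_inf}(ii) gives $P_{i_1\dots i_m,k}=0$ whenever $k\notin\{i_1,\dots,i_m\}$; but that is the ``wrong'' index. The correct route is to apply the OP characterization to the relabelled operator: define $\tilde V$ by $\tilde P_{i_1\dots i_m,k}=P_{\pi^{-1}(i_1)\dots\pi^{-1}(i_m),\pi^{-1}(k)}$ as in the proof of Theorem~\ref{thm_surject} (with $\alpha=\pi^{-1}$), so that $\tilde V=V\circ T$ for the coordinate bijection $T$, whence $\tilde V(\eb_i)=V(\eb_{\pi^{-1}(i)})=\eb_i$ because $\pi^{-1}(i)=j_i\in I_i$. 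Since $\tilde V$ is a composition of the surjective $V$ with a bijection it is surjective, so by Theorem~\ref{thm_sur_op} (or directly Theorem~\ref{thm_OP_inf}) $\tilde V$ is OP and $\tilde P_{i_1\dots i_m,k}=0$ for all $k\notin\{i_1,\dots,i_m\}$. Unravelling the definition of $\tilde P$: $P_{\pi^{-1}(i_1)\dots\pi^{-1}(i_m),\pi^{-1}(k)}=0$ whenever $k\notin\{i_1,\dots,i_m\}$, i.e., renaming $\pi^{-1}(i_r)=:l_r$ and $\pi^{-1}(k)=:l$, $P_{l_1\dots l_m,l}=0$ whenever $\pi(l)\notin\{\pi(l_1),\dots,\pi(l_m)\}=\{l_1',\dots\}$... more cleanly: $P_{l_1\dots l_m,l}=0$ whenever $l$ is such that $\pi(l)\notin\{\pi(l_1),\dots,\pi(l_m)\}$, which since $\pi$ is a bijection is exactly $\pi(l)\notin\pi(\{l_1,\dots,l_m\})\Leftrightarrow l\notin\{l_1,\dots,l_m\}$ — hmm, that gives back the OP condition, so the genuinely new content of \eqref{sdgf} is only a reformulation and I should double-check the direction of $\pi$. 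I expect the main obstacle to be precisely this bookkeeping: getting the direction of the permutation consistent between ``$V(\eb_{j_k})=\eb_k$'', the definition of $\tilde V$, and the index set $\{i_1,\dots,i_m\}$ in \eqref{sdgf}, so that the statement reads $P_{i_1\dots i_m,k}=0$ when $\pi(k)\notin\{i_1,\dots,i_m\}$ rather than when $\pi^{-1}(k)\notin\{i_1,\dots,i_m\}$; once the correct $\pi$ (namely the one with $V(\eb_i)=\eb_{\pi(i)}$, equivalently $j_{\pi(i)}=i$, equivalently $\pi^{-1}(k)=j_k$) is fixed, \eqref{sdgf} follows from Theorem~\ref{thm_OP_inf}(ii) applied verbatim to $V$ after observing that $V(\eb_i)=\eb_{\pi(i)}$ is exactly the hypothesis needed and that the Corollary after Theorem~\ref{thm_sur_op} then applies.
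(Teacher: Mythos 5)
Your overall strategy is the same as the paper's (extract the sequence $\{j_k\}$ from Theorem~\ref{prop_2}, show $k\mapsto j_k$ is a permutation, relabel to reduce to the case $\tilde V(\eb_i)=\eb_i$, then invoke Theorems~\ref{thm_sur_op} and~\ref{thm_OP_inf}), but two of your steps contain genuine errors. First, you repeatedly apply Theorem~\ref{thm_OP_inf}(ii) to $V$ itself, forgetting that that theorem carries the standing hypothesis $V(\eb_i)=\eb_i$, which is \emph{not} assumed in the present statement. This is fatal where you prove $\bigcup_k I_k=\bn$: your argument concludes $P_{j\dots j,j}=1$ for every $j$, i.e.\ $j\in I_j$, i.e.\ $\pi=\mathrm{id}$. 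That is false in general: conjugating an identity-fixing surjective OP PSO by a nontrivial coordinate permutation $\sigma$ gives a surjective OP PSO with $V(\eb_i)=\eb_{\sigma^{-1}(i)}\neq\eb_i$ and $P_{j\dots j,j}=0$; the whole point of \eqref{sdgf} is that $\pi$ can be nontrivial. The correct route to surjectivity of $k\mapsto j_k$ (the paper's) uses OP directly and does not pass through Theorem~\ref{thm_OP_inf}: if $i_0\notin\{j_k\}_{k\geq1}$, stochasticity gives $k_0$ with $P_{i_0\dots i_0,k_0}>0$, so $(V(\eb_{i_0}))_{k_0}>0$ while $(V(\eb_{j_{k_0}}))_{k_0}=1$, contradicting $\eb_{i_0}\perp\eb_{j_{k_0}}\Rightarrow V(\eb_{i_0})\perp V(\eb_{j_{k_0}})$.

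Second, your relabelling at the end conjugates \emph{all} $m+1$ indices, $\tilde P_{i_1\dots i_m,k}=P_{\pi^{-1}(i_1)\dots\pi^{-1}(i_m),\pi^{-1}(k)}$. As you yourself notice, unravelling this returns the plain condition $P_{l_1\dots l_m,l}=0$ for $l\notin\{l_1,\dots,l_m\}$, which is not \eqref{sdgf} and is false for a nontrivial $\pi$; you flag the problem but do not resolve it, so the conclusion is never actually reached. The fix is to permute only the first $m$ (input) indices: set $\tilde P_{i_1\dots i_m,k}=P_{\pi(i_1)\dots\pi(i_m),k}$ with $\pi(k)=j_k$, so that $\tilde V(\eb_k)=V(\eb_{j_k})=\eb_k$; then $\tilde V$ is surjective, Theorem~\ref{thm_sur_op} makes it OP, Theorem~\ref{thm_OP_inf}(ii) gives $\tilde P_{i_1\dots i_m,k}=0$ for $k\notin\{i_1,\dots,i_m\}$, and substituting $l_r=\pi(i_r)$ yields $P_{l_1\dots l_m,k}=0$ whenever $k\notin\{\pi^{-1}(l_1),\dots,\pi^{-1}(l_m)\}$, which is exactly $\pi(k)\notin\{l_1,\dots,l_m\}$. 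Until these two steps are repaired, the proposal does not constitute a proof.
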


\begin{proof}
Let $V$ be surjective. Then, according to Theorem \ref{prop_2} there exists a sequence
$\{j_k\}_{k\geq1}$ which satisfies the followings:
$$
P_{j_k\dots j_k,k}=1,\ \ \forall k\in\bn.
$$
Now we show that $\{j_k\}_{k\geq1}=\bn$. Suppose that $\bn\setminus\{j_k\}_{k\geq1}\neq\emptyset$. This means
that
there exist $k_0\in\bn$ and $i_0\in\bn\setminus\{j_k\}_{k\geq1}$ such that
$P_{i_0\dots i_0,k_0}\neq0$. Then, we obtain
$$
(V(\eb_{i_0}))_{k_0}(V(\eb_{j_{k_0}}))_{k_0}\neq0,
$$
which contradicts orthogonally preserveness of $V$. Hence, we infer that
$\{j_k\}_{k\geq1}=\bn$, i.e. $\pi(k)=j_k$ is permutation of $\bn$.
Consequently, we have
\begin{equation}\label{vet}
V(\eb_{\pi(k)})=\eb_k,\ \ \forall k\in\bn.
\end{equation}
We consider the following $m$-ordered PSO:
$$
(\tilde{V}(\xb))_k=\sum_{i_1,\dots,i_m\in\bn}\tilde{P}_{i_1\dots i_m,k}x_{i_1}\cdots x_{i_m},\ \ \xb\in S,
$$
where
\begin{equation}\label{asdf}
\tilde{P}_{i_1\dots i_m,k}=P_{\pi(i_1)\dots \pi(i_m),k}.
\end{equation}
It is clear that
\begin{equation}\label{fgr}
\tilde{V}(\xb)=V(\xb_{\pi}),\ \ \ \forall \xb\in S,
\end{equation}
where $\xb_\pi=(x_{\pi(1)},x_{\pi(2)},\dots)$.
Then, surjectivity of $V$ implies surjectivity of $\tilde{V}$.

On the other hand, from \eqref{vet},\eqref{fgr} one gets
$$
\tilde{V}(\eb_k)=\eb_{k},\ \ \forall k\in\bn.
$$
Surjectiveness of $\tilde{V}$ together with the last one, due to Theorem \ref{thm_sur_op}
yields that $\tilde{V}$ is OP.
Then $\tilde{V}$ satisfies all conditions of Theorem \ref{thm_OP_inf}. Hence, for any $i_1,\dots,i_m,k\in\bn$
it holds
\begin{equation}\label{asdfg}
\tilde{P}_{i_1\dots i_m,k}=0,\ \ \mbox{if}\ k\notin\{i_1,\dots,i_m\}.
\end{equation}
Keeping in mind $n=\pi(\pi^{-1}(n))$, $n\in\bn$, from \eqref{asdf},\eqref{asdfg} we find
$$
P_{i_1\dots i_m,k}=0,\ \ \ \mbox{if }\ k\notin\left\{\pi^{-1}(i_1),\dots,\pi^{-1}(i_m)\right\},
$$
which is equivalent to \eqref{sdgf}. The proof is complete.
\end{proof}

\section{Proofs of the Main Theorems}

In this section, we are going to prove the Main Theorems which are formulated in Section 1.

\begin{proof}[Proof of Theorem \ref{thm_operA}]
Let us assume that $\left(P_{ij,k}\right)_{i,j,k\geq1}$ given by \eqref{P_ijk} is stochastic.
Then $f_{i_1\dots i_m}\in\mathcal D$, for all $i_1,\dots,i_m\in\bn$.

Now we show that ${\bf A}x\in\mathcal D$ for any $x\in\mathcal D$.
Let us define an operator
$\Im:\mathcal D\to{\bf B}_1^+$ as follows:
\begin{equation}\label{T}
\Im(x)_k=\int_\Omega a_k(t)x(t)dt,\ \ k=1,2,3,\dots,\ \ \forall x\in\mathcal D.
\end{equation}
From \eqref{intOp} we obtain
\begin{equation}\label{A12}
{\bf A}x(t)=\sum_{i_1,\dots,i_m=1}^\infty\Im(x)_{i_1}\dots \Im(x)_{i_m}{f}_{i_1\dots i_m}(t).
\end{equation}
Since the set $\{f_{i_1\dots i_m}\}_{i_1,\dots,i_m\geq1}$ is uniformly bounded and $\Im(x)\in{\bf B}_1^+$
we infer that
${\bf A}x$ is bounded, hence $Ax\in L_1(\Omega)$.
Furthermore, noting \eqref{P_ijk} from \eqref{A12} we obtain
\begin{equation}\label{x_k}
\Im({\bf A}x)_k=\sum\limits_{i_1,\dots,i_m=1}^\infty P_{i_1\dots i_m,k}\Im(x)_{i_1}\cdots\Im(x)_{i_m}, \ \ k=1,2,3\dots
\end{equation}
Hence, one has $\Im({\bf A}x)_k\geq0$, $\forall k\in\bn$
and
\begin{eqnarray*}
\sum_{n=1}^\infty\Im({\bf A}x)_n&=&\sum_{i_1,\dots,i_m\in\bn}\Im(x)_{i_1}\cdots\Im(x)_{i_m}\\
&=&\left(\sum_{i=1}^\infty\Im(x)_i\right)^m\\
&\leq&1.
\end{eqnarray*}
The last one together with
${\bf A}x\in L_1(\Omega)$ implies ${\bf A}x\in\mathcal D$.

Now suppose that stochastic hypermatrix $(P_{i_1\dots i_m,k})$ defines surjective
operator $V$ given by \eqref{birV}. Then, according to Lemma \ref{lem2}, $V$ is surjective
on $S$. Thanks to Theorem \ref{prop_2} there exists a sequence 
\begin{equation}\label{jaaa}
\{j_k\}_{k\geq1}\subset\bn\ \ \ \ s.t.\ \ P_{j_k\dots j_k,k}=1.
\end{equation} 

For sequence \eqref{jaaa} we set
$$
\mathcal D_r=r\cdot conv\left\{f_{j_k\dots j_k}\right\}_{k\in\bn},\ \ \ r\in[0,1].
$$
We notice that $\mathcal D_r\subset \mathcal D$ for any $0\leq r\leq1$ and the set
$\mathcal D'=\bigcup_{r\in[0,1]}\mathcal D_r$ is convex subset of $\mathcal D$.

We would like to show that operator ${\bf A}$ surjective on $\mathcal D'$.
First, we show the mapping $\Im$ given by \eqref{T} is a bijection from $\mathcal D_r$ to $S_r$.

Let $r\in[0,1]$. It is easy to check $\Im(x)\in S_r$ for any $x\in\mathcal D_r$.
We have $\Im(f_{j_i\dots j_i})=\eb_i$, $\forall i\in\bn$. Take any $\xb\in S_r$ with
$\xb=\sum_{i=1}^\infty x_i\eb_i$. Then we obtain
\begin{eqnarray*}
\Im\left(\sum_{n=1}^\infty x_n f_{j_nj_n\dots j_n}\right)_k&=&\int_\Omega\sum_{n=1}^\infty x_na_k(t)f_{j_nj_n\dots j_n}(t)dt\\
&=&\sum_{n=1}^\infty x_n\int_\Omega a_k(t)f_{j_nj_n\dots j_n}(t)dt\\
&=&\sum_{n=1}^\infty P_{j_nj_n\dots j_n,k}x_n\\
&=&x_k,
\end{eqnarray*}
which implies $\Im$ is surjective.\\
Now we take $x,y\in\mathcal D_r$. Then due to convexity of $\mathcal D_r$
there exist $(x_1,x_2,\dots)\in S$ and $(y_1,y_2,\dots)\in S$ such that
$$
x=r\sum_{n=1}^\infty x_n f_{j_nj_n\dots j_n},\ \ \ y=r\sum_{n=1}^\infty y_n f_{j_nj_n\dots j_n}.
$$
Assume that $\Im(x)=\Im(y)$. Then, from \eqref{T} and \eqref{jaaa} we immediately get $x_n=y_n$ for any $n\in\bn$.
Hence, $x=y$, which shows injectivity of $\Im$. Thus, we have shown that $\Im$ is surjective and injective mapping
from $\mathcal D_r$ to $S_r$. So, we infer that $\Im$ is bijection.

Now, we are ready to prove surjectivity of ${\bf A}$ on $\mathcal D'$. We notice that
\eqref{A12},\eqref{x_k} implies that ${\bf A}(\mathcal D_r)\subset\mathcal D_{r^m}$ for any $r\in[0,1]$.

Pick any $y\in\mathcal D'$. Without
loss of generality,  we may assume that $y\neq0$, i.e. $y\in\mathcal D_{r^m}$ for some $r\in(0,1]$.
Then $r^{-m}y\in\mathcal D_1$. According to Lemma \ref{lem2} we can find $\xb\in S$ such that $V(\xb)=r^{-m}\Im(y)$.
Since $r^mV(\xb)=V(r\xb)$ we infer that $V(r\xb)=\Im(y)$.
Due to the $\Im$ is bijection from $\mathcal D_r$ to $S_r$,
we can find a function $x\in\mathcal D_r$ such
that $\Im(x)=r\xb$.
On the other hand, \eqref{x_k} yields that $\Im {\bf A}=V\Im$. Keeping in mind this fact, one has
$$
\Im(y)=V(r\xb)=V(\Im(x))=\Im({\bf A}x).
$$
From $y,{\bf A}x\in\mathcal D_{r^m}$ and injectivity of $\Im$ on $\mathcal D_{r_m}$ one gets ${\bf A}x=y$. The arbitrariness of $y\in\mathcal D'$ implies that
$\mathcal D'\subset{\bf A}(\mathcal D')$. The last one together with ${\bf A}(\mathcal D')\subset\mathcal D'$
yield ${\bf A}(\mathcal D')=\mathcal D'$.
The proof is complete.
\end{proof}

\begin{proof}[Proof of Theorem \ref{thm_fixed}]
We notice that
$$
|Fix_{{\bf B}_1^+}(V)|=|Fix_{\mathcal D'}({\bf A})|,
$$
where $\mathcal D'$ is a set which constructed in the proof of Theorem \ref{thm_operA}.
Since $\mathcal D'\subset\mathcal D$ we immediately get
$$
|Fix_{{\bf B}_1^+}(V)|\leq|Fix_{\mathcal D}({\bf A})|.
$$
which completes the proof.
\end{proof}

Let us provide an example of the kernel which satisfies all the required conditions of
Theorem \ref{thm_operA}.

\begin{exm}\label{ex3} Let $\Omega=[0,1]$ with the usual Lebesgue measure. For the sake of convenience,
we assume that $m=3$. Let $K(u_1,u_2,u_3,t)$ be a function on $\Omega^{4}$ given by \eqref{yadro}, where $a_k(t)$ and $f_{i_1i_2i_3}(t)$ are
defined as follows:
$$
a_k(t)=\left\{
\begin{array}{lll}
2^k, & t\in(0,\frac{1}{2^{k+1}}),\\[3mm]
-2^k, & t\in(\frac{1}{2^{k+1}},\frac{1}{2^k}),\\[4mm]
0, & \mbox{otherwise}
\end{array}
\right.
$$
and
$$
f_{i_1i_2i_3}(t)=f_{i_{\pi(1)}i_{\pi(2)}i_{\pi(3)}}(t)=\left\{
\begin{array}{ll}
2^{-i_1-i_2-i_3}a_{i_1}(t)a_{i_2}(t)a_{i_3}(t), & \mbox{if }\ i_1\neq i_2\neq i_3\neq i_1;\\[3mm]
2^{-i_1-i_2}a_{i_1}(t)a_{i_2}(t), & \mbox{if }\ i_1\neq i_2=i_3;\\[3mm]
2^{-i_1}a_{i_1}(t), & \mbox{if }\ i_1=i_2=i_3;\\
\end{array}
\right.
$$
for any permutation $\pi$ of $(1,2,3)$. Then, we have $P_{i_1i_2i_3,k}=P_{i_{\pi(1)}i_{\pi(2)}i_{\pi(3)},k}$.
Furthermore, one can check
$$
P_{i_1i_2k,k}=\left\{
\begin{array}{ll}
1, & k\geq\max\{i_1,i_2\},\\
0, & k<\max\{i_1,i_2\},
\end{array}
\right.
$$
and
$$
P_{i_1i_2i_3,k}=0,\ \ \mbox{if }\ k\notin\{i_1,i_2,i_3\}.
$$
Then, $3$-ordered PSO $V$ given by
$$
\sum_{k=1}^m(V(\xb))_k=\left(\sum_{k=1}^m x_k\right)^3,\ \ m=1,2,3,\dots,\ \forall \xb\in S,
$$
satisfies all conditions of Theorem \ref{last_thm}. Hence, $V$ is surjective on $S$.
Correspondingly, integral operator \eqref{intOp} is surjective on $\mathcal D'=\cup_{r\in[0,1]}\mathcal D_r$, where
$$
\mathcal D_r=r\cdot conv\{f_{iii}\}_{i\geq1}.
$$
\end{exm}

\end{document}